\begin{document}
\newtheorem{problem}{Problem}
\newtheorem{theorem}{Theorem}
\newtheorem{lemma}[theorem]{Lemma}
\newtheorem{claim}[theorem]{Claim}
\newtheorem{cor}[theorem]{Corollary}
\newtheorem{prop}[theorem]{Proposition}
\newtheorem{definition}{Definition}
\newtheorem{question}[theorem]{Question}

\def\cA{{\mathcal A}}
\def\cB{{\mathcal B}}
\def\cC{{\mathcal C}}
\def\cD{{\mathcal D}}
\def\cE{{\mathcal E}}
\def\cF{{\mathcal F}}
\def\cG{{\mathcal G}}
\def\cH{{\mathcal H}}
\def\cI{{\mathcal I}}
\def\cJ{{\mathcal J}}
\def\cK{{\mathcal K}}
\def\cL{{\mathcal L}}
\def\cM{{\mathcal M}}
\def\cN{{\mathcal N}}
\def\cO{{\mathcal O}}
\def\cP{{\mathcal P}}
\def\cQ{{\mathcal Q}}
\def\cR{{\mathcal R}}
\def\cS{{\mathcal S}}
\def\cT{{\mathcal T}}
\def\cU{{\mathcal U}}
\def\cV{{\mathcal V}}
\def\cW{{\mathcal W}}
\def\cX{{\mathcal X}}
\def\cY{{\mathcal Y}}
\def\cZ{{\mathcal Z}}

\def\A{{\mathbb A}}
\def\B{{\mathbb B}}
\def\C{{\mathbb C}}
\def\D{{\mathbb D}}
\def\E{{\mathbb E}}
\def\F{{\mathbb F}}
\def\G{{\mathbb G}}
\def\I{{\mathbb I}}
\def\J{{\mathbb J}}
\def\K{{\mathbb K}}
\def\L{{\mathbb L}}
\def\M{{\mathbb M}}
\def\N{{\mathbb N}}
\def\O{{\mathbb O}}
\def\P{{\mathbb P}}
\def\Q{{\mathbb Q}}
\def\R{{\mathbb R}}
\def\S{{\mathbb S}}
\def\T{{\mathbb T}}
\def\U{{\mathbb U}}
\def\V{{\mathbb V}}
\def\W{{\mathbb W}}
\def\X{{\mathbb X}}
\def\Y{{\mathbb Y}}
\def\Z{{\mathbb Z}}

\def\ep{{\mathbf{e}}_p}
\def\em{{\mathbf{e}}_m}
\def\eq{{\mathbf{e}}_q}

\def\scr{\scriptstyle}
\def\\{\cr}
\def\({\left(}
\def\){\right)}
\def\[{\left[}
\def\]{\right]}
\def\<{\langle}
\def\>{\rangle}
\def\fl#1{\left\lfloor#1\right\rfloor}
\def\rf#1{\left\lceil#1\right\rceil}
\def\le{\leqslant}
\def\ge{\geqslant}
\def\eps{\varepsilon}
\def\mand{\qquad\mbox{and}\qquad}

\def\sssum{\mathop{\sum\ \sum\ \sum}}
\def\ssum{\mathop{\sum\, \sum}}
\def\ssumw{\mathop{\sum\qquad \sum}}

\def\vec#1{\mathbf{#1}}
\def\inv#1{\overline{#1}}
\def\num#1{\mathrm{num}(#1)}
\def\dist{\mathrm{dist}}

\def\fA{{\mathfrak A}}
\def\fB{{\mathfrak B}}
\def\fC{{\mathfrak C}}
\def\fU{{\mathfrak U}}
\def\fV{{\mathfrak V}}

\newcommand{\bflambda}{{\boldsymbol{\lambda}}}
\newcommand{\bfxi}{{\boldsymbol{\xi}}}
\newcommand{\bfrho}{{\boldsymbol{\rho}}}
\newcommand{\bfnu}{{\boldsymbol{\nu}}}

\def\GL{\mathrm{GL}}
\def\SL{\mathrm{SL}}

\def\Hba{\overline{\cH}_{a,m}}
\def\Hta{\widetilde{\cH}_{a,m}}
\def\Hb1{\overline{\cH}_{m}}
\def\Ht1{\widetilde{\cH}_{m}}

\def\flp#1{{\left\langle#1\right\rangle}_p}
\def\flm#1{{\left\langle#1\right\rangle}_m}
\def\dmod#1#2{\left\|#1\right\|_{#2}}
\def\dmodq#1{\left\|#1\right\|_q}

\def\Zm{\Z/m\Z}

\def\Err{{\mathbf{E}}}

\newcommand{\comm}[1]{\marginpar{%
\vskip-\baselineskip 
\raggedright\footnotesize
\itshape\hrule\smallskip#1\par\smallskip\hrule}}
\pagestyle{plain}
\def\xxx{\vskip5pt\hrule\vskip5pt}


\title{\bf Some multiplicative equations in finite fields}

\date{\today}

 \author[B. Kerr] {Bryce Kerr}

\address{School of Physical, Environmental and Mathematical Sciences, The University of New South Wales Canberra, Australia}
\email{b.kerr@adfa.edu.au}

\begin{abstract}
In this paper we consider estimating the number of solutions to multiplicative equations in finite fields when the variables run through certain sets with high additive structure. In particular, we consider estimating the multiplicative energy of generalized arithmetic progressions in prime fields and of boxes in arbitrary finite fields and obtain sharp bounds in more general scenarios than previously known.  Our arguments extend some ideas of Konyagin and Bourgain and Chang into new settings.
\end{abstract}

\maketitle


\maketitle
\section{Introduction}
For a prime number $q$ and integer $n$ consider the finite field $\F_{q^{n}}$ with $q^n$ elements. For a subset $\cA\subseteq \F_{q^n}$ we define the multiplicative energy $E(\cA)$ of $\cA$ to count the number of solutions to the equation
$$a_1a_2=a_3a_4, \quad a_1,a_2,a_3,a_4\in \cA.$$
In this paper we consider estimating $E(\cA)$ for certain sets $\cA$ with large additive structure. In particular, we consider the case of boxes in arbitrary finite fields and generalized arithmetic progressions in prime fields. These two problems may be considered as extreme cases of the sum-product phenomenon of Erd\"{o}s and Szemer\'{e}di~\cite{ES},  established in the setting of prime fields by Bourgain, Katz and Tao~\cite{BKT} and arbitrary finite fields by Katz and Shen~\cite{KS}. The sum-product theorem over $\F_{q^{n}}$ states that for any $\varepsilon$ there exists some $\delta>0$ such that if $|\cA|\le q^{(1-\varepsilon)n}$ then
\begin{align}
\label{eq:sumproduct}
\max\{|\cA\cA|,|\cA+\cA| \}\gg |\cA|^{1+\delta},
\end{align}
with the condition that if $n\ge 2$ then $\cA$ does not have a large intersection with any proper subfield, where $\cA\cA$ and $\cA+\cA$ denote the sum and product set 
$$\cA\cA=\{ a_1a_2 \ : \ a_1,a_2\in \cA \}, \quad \cA+\cA=\{ a_1+a_2 \ : \ a_1,a_2\in \cA\}.$$
 An important factor in this problem is how large one may take $\delta$ in~\eqref{eq:sumproduct}. Erd\"{o}s and Szemer\'{e}di~\cite{ES} conjectured that for any set of integers  $\cA$  one may take any fixed $\delta<1$. We expect this conjecture to remain true over finite fields with suitable size restrictions on $\cA$ and the intersection of $\cA$ with proper subfields. Current techniques are still far from resolving this conjecture and  and we refer the reader to~\cite{RSS},~\cite{Sha} and~\cite{LR,RRS} for the current best  quantitative results for sum product over $\R$,  prime fields and general finite fields. 
\newline

A typical approach to the sum-product problem is to estimate the multiplicative energy of a set $\cA$ in terms of the size of the sumset $\cA+\cA$ since it follows from the Cauchy-Schwarz inequality 
\begin{align*}
|\cA\cA|\ge \frac{|\cA|^4}{E(\cA)}.
\end{align*}
For sets $\cA$ satisfying 
\begin{align}
\label{eq:smalldoubling}
|\cA+\cA|\ll |\cA|,
\end{align}
 we expect that
\begin{align}
\label{eq:multenergysmallsumset}
E(\cA)\ll |\cA|^{2+o(1)},
\end{align}
from which it would follow that 
\begin{align*}
|\cA\cA|\gg |\cA|^{2-\varepsilon}.
\end{align*}
This is known to hold over $\R$ by a result of  Elekes and Ruzsa~\cite{ER}, see also~\cite{Chang2}, although  still open in  the case of finite fields  and we refer the reader to~\cite{MPRRS} for the sharpest results in the setting of small sumset in prime fields.  In this paper we consider the problem of obtaining estimates of the strength~\eqref{eq:multenergysmallsumset} under the condition~\eqref{eq:smalldoubling} in the setting of finite fields and obtain some new instances of when this bound holds.
\newline

 An important class of sets with small sumset are generalized arithmetic progressions, which are defined as sets of the form
$$\cA=\{ c+\alpha_1h_1,\dots,\alpha_dh_d +\beta \ :  \ 1\le h_i\le H_i\},$$
and define $\cA$ to be proper if $|\cA|=H_1\dots H_d$. By Frieman's theorem, see for example~\cite[Chapter~5]{TV}, every set $\cA$ satisfying~\eqref{eq:smalldoubling} is dense in some proper generalized arithmetic progression and hence an approach to extending the result of Elekes and Ruzsa~\cite{ER} into finite fields is to show that~\eqref{eq:multenergysmallsumset} holds for generalized arithmetic progressions.  We take a step forward in this direction and give the expected upper bound for $E(\cA)$ for a certain family of generalized arithmetic progressions, see Theorem~\ref{thm:main2} below. Roughly speaking, our result holds for generalized arithmetic progressions which are smaller portions of proper generalized arithmetic progressions. 
\newline


We also consider estimating the multiplicative energy of boxes in arbitrary finite fields. Let $\omega_1,\dots,\omega_n$ be a basis for $F_{q^{n}}$ as a vector space over $\F_q$ and define the box
$$B=\{ \omega_1 h_1+\dots+\omega_nh_n \ : \ M_i<h_i\le M_i+H_i\}.$$
The first estimates for $E(B)$ were  motivated by the problem of extending the Burgess bound into aribtrary finite fields and are due to Burgess~\cite{Bur1} and Karatsuba~\cite{Kar1,Kar2} although the results of Burgess and Karatsuba are not uniform with respect to the basis $\omega_1,\dots,\omega_n$. Davenport and Lewis~\cite{DL} provided the first estimates uniform with respect to the basis $\omega_1,\dots,\omega_n$ although their bound is quantitatively much weaker than that of Burgess and Karatsuba. The estimate of Davenport and Lewis was improved by Chang~\cite{Chang1} using techniques from additive combinatorics which was further improved by Konyagin~\cite{Kon} who showed the expected upper bound
$$E(B)\ll |B|^{2+o(1)},$$
 in the special case that 
$$H_1=H_2=\dots=H_n,$$
and we note that removing this restriction in Konyagin's argument seems to be a difficult problem. Recently Gabdullin~\cite{Ga} has extended Konyagin's estimate to arbitrary boxes when $n=2,3$. In this paper we show Konyagin's estimate holds with the weaker condition 
$$\max{H_i}\ll q^{1/n}\min{H_i},$$
for arbitrary $n$.
We follow Konyagin's strategy which is based on considering the successive minima of a certain family of lattice and their duals and our main novelty for this section comes from establishing certain inequalities for these successive minima by using Siegel's lemma.
\newline

Finally we draw some comparisions between our argument for generalized arithmetic progressions and Konyagin's approach~\cite{Kon}, further developed by Bourgain and Chang~\cite{BC} to deal with multiplicative equations with systems of linear forms. Both Konyagin and Bourgain and Chang reduce the problem to a lattice point counting problem over a family of lattices. An important feature of these families is that they are in a sense self dual which allows control of the successive minima via transference theorems. In order to reduce the problem of multiplicative energy of generalized arithmetic progressions into a lattice point counting problem with the same symmetry as in~\cite{BC,Kon} we first expand into additive characters and considering the sets of large Fourier coefficients, this allows a reduction of the problem into multiplicative equations with generalized arithmetic progressions and Bohr sets and this form of the problem has suitable symmetry.

\section{Main results}
\begin{theorem}
\label{thm:main1}
Let $q$ be prime, $n$ a positive integer and suppose $\omega_1,\dots,\omega_n$ is a basis for $\F_{q^{n}}$ as a vector space over $\F_{q}$. For two $n$-tuples of positive integers 
$H=(H_1,\dots,H_n)$ and $M=(M_1,\dots,M_n)$ we let $B$ denote the box
$$B=\{ \omega_1 h_1+\dots+\omega_nh_n \ : \ M_i<h_i\le M_i+H_i\}.$$
If $H_1,\dots,H_n$ satisfy
$$H_n\le H_{n-1}\le \dots \le H_1\le q,$$
\begin{align}
\label{eq:Hcondthm1}
\prod_{k=1}^{i-1}H_k\ll \frac{qH_i^{i}}{H_{i-1}}, \quad 2\le i \le n,
\end{align}
and 
\begin{align}
\label{eq:Hcondthm2}
H_{n-i+1}^{i}\ll qH_n\prod_{k=n-i+2}^{n}H_k, \quad 2\le i \le n,
\end{align}
then we have 
\begin{align*}
E(B)\ll  \frac{|B|^4}{q^{n}}+|B|^2(\log{|B|})^{n}.
\end{align*}
\end{theorem}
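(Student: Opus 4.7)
The plan is to follow Konyagin's strategy of reducing the multiplicative energy to a lattice point counting problem and controlling successive minima across all multipliers, using Siegel's lemma to establish the key inequalities in the asymmetric setting where $H_1, \ldots, H_n$ need not be equal.

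First I would pass from the energy to a sum over multipliers. Writing $a_1 a_2 = a_3 a_4$ as $a_1/a_3 = a_4/a_2 = t$, and handling degenerate cases separately, yields
$$E(B) = \sum_{t \in \F_{q^n}^*} N(t)^2 + O(|B|^2),$$
where $N(t) = |\{b \in B : tb \in B\}|$. The multiplier $t = 1$ contributes exactly $|B|^2$, matching the diagonal part of the target bound.

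Next I would interpret each $N(t)$ via a lattice. Identifying $\F_{q^n}$ with $\F_q^n$ through the basis $\omega_1, \ldots, \omega_n$ realizes multiplication by $t$ as an $\F_q$-linear map $T_t$, and $N(t)$ is precisely the number of integer points of the lattice
$$L_t = \{(h, h') \in \Z^{2n} : h' \equiv T_t h \pmod{q}\}$$
inside the axis-aligned box with side lengths $(H_1, \ldots, H_n, H_1, \ldots, H_n)$. Since $L_t$ has covolume $q^n$, the classical bound in terms of its successive minima $\lambda_1(t) \le \cdots \le \lambda_{2n}(t)$ (with respect to the box norm) controls $N(t)$. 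A dyadic decomposition then gives
$$E(B) \ll |B|^2 + \sum_{K} K^2 \,\bigl|\{t \in \F_{q^n}^* : N(t) \ge K\}\bigr|,$$
with $K$ ranging over dyadic values, so the task reduces to bounding the number of $t$'s for which $L_t$ has unusually many short vectors. The generic range $K \asymp |B|^2/q^n$ is responsible for the $|B|^4/q^n$ term; the exceptional range must be treated separately.

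The main obstacle, and the heart of the argument, is bounding this exceptional set under only the hypotheses~\eqref{eq:Hcondthm1} and~\eqref{eq:Hcondthm2}, rather than the symmetric condition $H_1 = \cdots = H_n$ used by Konyagin. In Konyagin's setting, Minkowski's second theorem together with the near self-duality of $L_t$ yields transference bounds relating $\lambda_i(L_t)$ to $\lambda_{2n+1-i}(L_t^*)$ without aspect-ratio loss. When the $H_i$'s differ, the box and its polar are misaligned and the naive transference bleeds factors. Here I would invoke Siegel's lemma: starting from the forced short vectors in $L_t$ or $L_t^*$ arising from the coordinate widths $H_i$, Siegel's lemma produces further linearly independent short vectors in auxiliary sublattices cut out by suitable families of linear forms. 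The conditions~\eqref{eq:Hcondthm1} and~\eqref{eq:Hcondthm2} are precisely those under which the vectors produced by Siegel's lemma match the successive-minima profile dictated by the aspect ratio of the box, so that the resulting chain of inequalities among the $\lambda_i(t)$ closes without loss.

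Finally, summing the dyadic contributions over $K$ and accounting for the multi-scale lattice structure (one scale per coordinate direction) yields the stated bound, with the $(\log |B|)^n$ factor reflecting the $n$ nested dyadic sums over the successive minima.
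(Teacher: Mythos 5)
Your high-level plan does match the paper's: reduce $E(B)$ to $\sum_z |\Gamma(z)\cap D|^2$ over the family of lattices $\Gamma(z)=\{(x,y)\in\Z^{2n}:z(\sum\omega_ix_i)=\sum\omega_iy_i\}$, partition by the number $s(z)$ of successive minima $\le 1$, use Mahler transference when $s(z)>n$, and bring in Siegel's lemma as the new tool enabling the asymmetric side lengths. But two things in the write-up are off, and one of them is the crux.

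First, the role of Siegel's lemma is inverted. You describe it as ``producing further linearly independent short vectors,'' i.e.\ as an upper bound tool. In the paper it is the engine of a contradiction argument establishing \emph{lower} bounds on the successive minima: assuming $\lambda_i(z)<1/H_i$ (resp.\ $\lambda_i^*(z)\ll H_{n-i+1}/q$), the $i$ resulting short lattice points must, by the ordering $H_n\le\cdots\le H_1$, have zero coordinates in the tail (resp.\ head) positions, so they project to $i$ linearly independent points of $\Z^{2(i-1)}$. Siegel's lemma then produces a \emph{small integer linear dependence} $(t_1,\dots,t_i)$ among the $x$-coordinates; the hypotheses~\eqref{eq:Hcondthm1} (resp.~\eqref{eq:Hcondthm2}) are calibrated exactly so that the induced combination of $y$-coordinates, which is $\equiv 0\pmod q$, has absolute value $<q$ and hence vanishes, contradicting independence. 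The conclusions $\lambda_i(z)\ge 1/H_i$ and $\lambda_i^*(z)\gg H_{n-i+1}/q$ are what make the whole bound close, and your proposal never actually establishes them.

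Second, the dyadic decomposition must be carried out on the individual successive minima $\lambda_1,\dots,\lambda_j$ (and on $\lambda_1^*,\dots,\lambda_{2n-j}^*$ after transference), not on $N(t)$ itself. The count of multipliers in a fixed dyadic class uses that each nonzero point of $2^{-k_1+1}D\cap\Z^{2n}$ (or of $2^{-k_1+1}D^*\cap\Z^{2n}/q$) belongs to at most one $\Gamma(z)$ (resp.\ $\Gamma^*(z)$), while the Siegel-derived lower bounds truncate each dyadic sum to $O(\log H_i)$ (resp.\ $O(\log(q/H_{n-i+1}))$) classes per coordinate; this is precisely where the $(\log|B|)^n$ loss comes from. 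A decomposition on $N(t)$ alone does not make visible where the aspect-ratio hypotheses enter, nor why the log factor is $n$-fold rather than $2n$-fold or worse.
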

We may put the conditions on $H_1,\dots,H_n$  occuring in Theorem~\ref{thm:main1} in the following simpler form.
\begin{cor}
\label{cor:main1}
Let $q$ be prime, $n$ a positive integer and suppose $\omega_1,\dots,\omega_n$ is a basis for $\F_{q^{n}}$ as a vector space over $\F_{q}$. For two $n$-tuples of positive integers 
$H=(H_1,\dots,H_n)$ and $M=(M_1,\dots,M_n)$ we let $B$ denote the box
$$B=\{ \omega_1 h_1+\dots+\omega_nh_n \ : \ M_i<h_i\le M_i+H_i\}.$$
If $H_1,\dots,H_n$ satisfy
$$H_n\le H_{n-1}\le \dots \le H_1\le q,$$
and
\begin{align*}
H_1\ll q^{1/n}H_n,
\end{align*}
then we have 
\begin{align*}
E(B)\ll  \frac{|B|^4}{q^{n}}+|B|^2(\log{|B|})^{n}.
\end{align*}
\end{cor}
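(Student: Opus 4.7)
The plan is to deduce Corollary~\ref{cor:main1} directly from Theorem~\ref{thm:main1} by showing that the single assumption $H_1 \ll q^{1/n} H_n$, together with the monotonicity chain $H_n \le \dots \le H_1 \le q$, already implies both \eqref{eq:Hcondthm1} and \eqref{eq:Hcondthm2}. Once these two implications are in hand, the conclusion of the corollary is immediate.

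For \eqref{eq:Hcondthm1}, I would bound each of the $i-1$ factors on the left by $H_1$ and use the main hypothesis to obtain
$$\prod_{k=1}^{i-1} H_k \le H_1^{i-1} \le q^{(i-1)/n} H_n^{i-1} \le q^{(i-1)/n} H_i^{i-1},$$
where the last inequality uses $H_n \le H_i$. Substituting this into \eqref{eq:Hcondthm1} and cancelling a factor of $H_i^{i-1}$, the required statement reduces to $H_{i-1} \ll q^{(n-i+1)/n} H_i$. Since $H_{i-1} \le H_1 \le q^{1/n} H_n \le q^{1/n} H_i$ and $n - i + 1 \ge 1$ whenever $i \le n$, this is automatic.

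For \eqref{eq:Hcondthm2}, the argument is even more direct: upper bound $H_{n-i+1}^i \le (q^{1/n} H_n)^i = q^{i/n} H_n^i$, and use $H_k \ge H_n$ on each of the $i-1$ factors in the product on the right to get $qH_n \prod_{k=n-i+2}^n H_k \ge qH_n^i$. The desired inequality then collapses to $q^{i/n} \ll q$, which holds for every $i \le n$. With both hypotheses of Theorem~\ref{thm:main1} verified, the bound $E(B) \ll |B|^4/q^n + |B|^2 (\log|B|)^n$ follows at once. There is no genuine obstacle here; the corollary is really a repackaging of Theorem~\ref{thm:main1} with its hypotheses expressed in a form that is easier to check in practice, and the main point of the short proof is simply to confirm that the exponents of $q$ and $H_i$ balance correctly in each case.
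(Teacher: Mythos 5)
Your verification is correct: both of the two implications you check are exactly what is needed, and the arithmetic with the exponents of $q$ works out — for \eqref{eq:Hcondthm1} the point is that $H_{i-1}\prod_{k=1}^{i-1}H_k \le H_1^i \ll q^{i/n}H_i^i \le qH_i^i$, and for \eqref{eq:Hcondthm2} that $H_{n-i+1}^i \ll q^{i/n}H_n^i \le q H_n\prod_{k=n-i+2}^n H_k$, both using $i\le n$. The paper gives no explicit proof of Corollary~\ref{cor:main1} (it is presented as an immediate simplification of the hypotheses of Theorem~\ref{thm:main1}), and your argument is precisely the routine verification the authors leave to the reader.
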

We next consider estimating the multiplicative energy of generalized arithmetic progressions in prime fields.
\begin{theorem}
\label{thm:main2}
Let $q$ be a prime number, $\mathcal{A}\subset \F_q$  a generalised arithmetic progression given by 
\begin{align*}
\cA=\{ \alpha_1h_1+\dots+\alpha_dh_d  \ : \ 1\le h_i \le H\},
\end{align*}
and suppose that the progression
$$\cA'=\{ \alpha_1h_1+\dots+\alpha_dh_d \ : \ |h_i| \le H^2\},$$
is proper. 
Then we have
$$E(\cA)\ll |\mathcal{A}|^2(\log{H})^{2d+1}.$$
\end{theorem}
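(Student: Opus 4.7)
The approach is Fourier-analytic, following the hint in the introduction. First I would expand $\mathbf{1}_{\cA}$ in additive characters, use a quadratic Gauss-sum identity to rewrite $E(\cA)$ as a weighted four-fold multiplicative sum in the Fourier coefficients $S(t) = \sum_{a \in \cA} e_q(ta)$, and then exploit that $|S|$ is concentrated on a Bohr set associated to $\cA$. Detecting $a_1 a_2 = a_3 a_4$ by an additive character and using $\sum_{a, b \in \F_q} e_q(uab - t_1 a - t_2 b) = q\, e_q(-t_1 t_2/u)$ for $u \ne 0$ leads to
\begin{align*}
E(\cA) = \frac{|\cA|^4}{q} + \frac{1}{q^2} \sum_{t_1 t_2 = t_3 t_4} S(t_1)\, S(t_2)\, \overline{S(t_3)}\, \overline{S(t_4)}.
\end{align*}
Since $\cA'$ is proper, $(2H^2+1)^d \le q$, so $|\cA|^4/q \ll |\cA|^2$ and the first term is harmless. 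Moreover $S(t) = \prod_{i=1}^d \sigma(t\alpha_i)$ with $|\sigma(x)| \le \min(H, 1/(2\|x/q\|))$, so $|S(t)|$ is large only when $t$ lies in a Bohr set $\mathcal{B}(\rho) = \{t \in \F_q : \|t\alpha_i/q\| \le \rho\ \forall i\}$ of small radius.

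I would then decompose $t$ dyadically by the size of $|S(t)|$: set $T_j = \{t : 2^{-j-1}|\cA| < |S(t)| \le 2^{-j}|\cA|\}$ for $j = 0, 1, \ldots, O(\log H)$, so $T_j \subseteq \mathcal{B}(\rho_j)$ for some $\rho_j \asymp 2^{j/d}/H$. The Fourier sum then splits as $\sum_{j_1, \ldots, j_4} \prod_k (2^{-j_k}|\cA|) \cdot N_{\vec{j}}$, where $N_{\vec{j}}$ counts $(t_i) \in T_{j_1} \times \cdots \times T_{j_4}$ with $t_1 t_2 = t_3 t_4$. To control $|T_j|$ I would study the lattice $L = \{h \in \Z^d : \alpha \cdot h \equiv 0 \pmod q\}$: the properness of $\cA'$ is exactly the statement $L \cap [-H^2, H^2]^d = \{0\}$, which by Minkowski's second theorem fixes the successive minima of $L$ and, by transference, those of its dual, yielding the expected upper bound on $|\mathcal{B}(\rho_j)|$.

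The crux is bounding $N_{\vec{j}}$, i.e., counting multiplicative quadruples inside Bohr sets. Here the self-dual symmetry advertised in the introduction is essential: via the lattice $L$ and its dual, the count reduces to a lattice point problem of the same flavor as in Konyagin~\cite{Kon} and Bourgain--Chang~\cite{BC}, with the successive minima of $L$ providing the necessary control. Summing the dyadic contributions, with the logarithmic losses distributed between the four dyadic sums and the Bohr set counts on each slice, yields the target bound $|\cA|^2 (\log H)^{2d+1}$. The principal obstacle is this last step: the properness hypothesis controls only a single linear lattice condition, whereas $N_{\vec{j}}$ encodes a quadratic multiplicative constraint, and it is the transference between $L$ and its dual, together with a careful tracking of the dyadic thresholds against the successive minima, that bridges the two.
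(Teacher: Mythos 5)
Your high-level strategy — expand $\mathbf{1}_{\cA}$ in additive characters, localize to Bohr sets associated to $\alpha$, reduce to lattice point counting controlled by successive minima and transference, with Shao's lemma and the properness of $\cA'$ supplying the Bohr set bounds — is the right one and matches the paper's in spirit. However, the reduction you choose and the crucial counting step contain real gaps.

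First, the dyadic slicing $T_j = \{t : 2^{-j-1}|\cA| < |S(t)| \le 2^{-j}|\cA|\}$ is \emph{not} contained in a Bohr set $\cB(\rho_j)$ with a single radius $\rho_j \asymp 2^{j/d}/H$. Since $|S(t)| \asymp \prod_{i=1}^d \min\bigl(H, \tfrac{1}{\|\alpha_i t/q\|}\bigr)$, the level set $T_j$ contains anisotropic configurations, e.g.\ $\|\alpha_1 t/q\| \asymp 2^j/H$ with all other $\|\alpha_i t/q\| \asymp 1/H$, which lies far outside $\cB(2^{j/d}/H)$. This is precisely why the paper's Lemma~\ref{lem:reduction} performs a $d$-fold dyadic decomposition (the sets $B(j)$ indexed by $j=(j_1,\dots,j_d)$), one index per coordinate, costing $(\log H)^d$ pieces per Fourier variable and hence $(\log H)^{2d}$ overall; your coarse single-index decomposition would miss this and the log accounting cannot close as stated.

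Second, your Gauss-sum manipulation correctly produces $E(\cA) = |\cA|^4/q + q^{-2}\sum_{t_1t_2=t_3t_4}S(t_1)S(t_2)\overline{S(t_3)}\,\overline{S(t_4)}$, but this puts all four variables in Bohr-set range, which is a genuinely different reduction from the paper's. The paper expands only one copy of the indicator (detecting $a_1a_2a_3^{-1}\in\cA$), applies Cauchy--Schwarz to decouple the two Fourier variables, and reduces to the \emph{mixed} quantity $E(\cA,\varepsilon)$ counting $a_1 b_1 \equiv a_2 b_2$ with $a_i \in \cA$ and $b_i$ in a Bohr set. This mixed form is what carries the self-dual symmetry you invoke: the lattice $\Gamma(\omega)$ lives in $\Z^d \times \cL$ (GAP side, Bohr side), its dual $\Gamma^*(\omega)$ lives in $\cL \times \Z^d$ with $H$ and $q/H$ swapped in the body, so both the primal and dual lattice-point counts are again GAP-times-Bohr counts to which Lemma~\ref{lem:shao} applies directly. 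A pure Bohr-quadruple count $N_{\vec j}$ corresponds to a lattice in $\cL\times\cL$ whose dual sits in roughly $\Z^d\times\Z^d$, and whether the requisite successive-minima inequalities hold in that asymmetric setting is neither addressed in your proposal nor covered by Konyagin or Bourgain--Chang. This is the mathematical heart of the theorem, and ``reduces to a lattice point problem of the same flavor'' does not establish it.
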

Theorem~\ref{thm:main2} implies the same estimate with arbitrary translates of $\cA$
\begin{cor}
\label{cor:main2}
Let $q$ be a prime number, $\mathcal{A}\subset \F_q$  a generalized arithmetic progression given by 
\begin{align*}
\cA=\{ \alpha_1h_1+\dots+\alpha_dh_d+\beta  \ : \ 1\le h_i \le H\},
\end{align*}
and suppose that the progression
$$\cA'=\{ \alpha_1h_1+\dots+\alpha_dh_d \ : \ |h_i| \le H^2\},$$
is proper. 
Then we have
$$E(\cA)\ll |\mathcal{A}|^2(\log{H})^{2d+1}.$$
\end{cor}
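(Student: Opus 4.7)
Writing $\cA_0=\{\alpha_1h_1+\dots+\alpha_dh_d\colon 1\le h_i\le H\}$, so that $\cA=\cA_0+\beta$, my plan is to deduce the corollary from Theorem~\ref{thm:main2} by showing that the proof of the theorem depends on $\cA_0$ only through additive-translation-invariant data: the Fourier moduli $|S_{\cA_0}(s)|=|\sum_{x\in\cA_0}e_q(sx)|$ and the doubled progression $\cA'$. Since $S_{\cA_0+\beta}(s)=e_q(s\beta)\,S_{\cA_0}(s)$, the moduli are unchanged by the translate, and $\cA'$ depends only on the coefficients $\alpha_i$ and so is also unchanged.

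Concretely, open the multiplicative energy via additive characters,
$$E(\cA)=\frac{1}{q}\sum_{t\in\F_q}\Bigl|\sum_{a,b\in\cA}e_q(tab)\Bigr|^2,$$
substitute $a=x+\beta$, $b=y+\beta$, and use $(x+\beta)(y+\beta)=xy+\beta(x+y)+\beta^2$ together with the change of variable $u=x+\beta\in\cA$ to obtain
$$\Bigl|\sum_{a,b\in\cA}e_q(tab)\Bigr|=\Bigl|\sum_{u\in\cA}e_q(t\beta u)\,S_{\cA_0}(tu)\Bigr|.$$
This expresses $E(\cA)$ in terms of the Fourier transform $S_{\cA_0}$ that appears in the proof of Theorem~\ref{thm:main2}, together with a unimodular twist $e_q(t\beta u)$. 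I then run that proof verbatim on the right-hand side: split the sum over $t$ into large- and small-spectrum regimes; handle the small-spectrum piece by the triangle inequality (which absorbs the twist without loss); and reduce the large-spectrum piece to a lattice incidence count between $\cA$ and dilates of the spectrum $\{s\colon|S_{\cA_0}(s)|>\rho|\cA_0|\}$, controlled via Siegel's lemma applied to $\cA'$ exactly as in the proof of Theorem~\ref{thm:main2}.

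The main obstacle is to verify that the incidence-counting step is genuinely insensitive to the translate, i.e.\ that replacing counts of $u\in\cA_0$ by counts of $u\in\cA_0+\beta$ in the relevant intersections with Bohr sets yields the same bound up to absolute constants. This translation-invariance of the Siegel-lemma lattice counts used in Theorem~\ref{thm:main2} is the single point that requires a careful reading of that proof; once confirmed, assembling the two regime estimates delivers the claimed bound $E(\cA)\ll|\cA|^2(\log H)^{2d+1}$.
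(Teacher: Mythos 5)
Your plan is correct in spirit but far more elaborate than necessary, and it contains an acknowledged gap that you never close. The paper's own proof is a short elementary reduction with no Fourier analysis at all. Let $I(\lambda)$ count pairs $a_1,a_2\in\cA$ with $a_1\equiv\lambda a_2\pmod q$, so that $E(\cA)=\sum_\lambda I(\lambda)^2$, and set $\cA_0=\{\alpha_1h_1+\dots+\alpha_dh_d : |h_i|\le H\}$. If $I(\lambda)>0$, fix one solution $(a_1^{(0)},a_2^{(0)})$; subtracting it coordinatewise from any other solution gives an injection of the solution set for $\cA$ into $\{(b_1,b_2)\in\cA_0\times\cA_0 : b_1\equiv\lambda b_2\}$, since the translate $\beta$ cancels. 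Hence $I(\lambda)\le I_0(\lambda)$ and so $E(\cA)\le E(\cA_0)+|\cA|^2$. Finally $\cA_0$ splits into $O(2^d)$ progressions of the form treated in Theorem~\ref{thm:main2}, each of which has the same symmetric doubled progression $\cA'$ and therefore inherits the properness hypothesis. That is the entire argument.

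Your Fourier identity is correct, but the reduction you draw from it misreads the structure of what changes under translation. You describe the translate as producing only a unimodular twist $e_q(t\beta u)$, yet in $\sum_{u\in\cA}e_q(t\beta u)S_{\cA_0}(tu)$ the transform $S_{\cA_0}$ is evaluated at $tu$ with $u$ ranging over the \emph{translated} set $\cA=\cA_0+\beta$, so the arguments of $S_{\cA_0}$ themselves shift by $t\beta$ — this is not merely a phase factor and would need its own analysis. Moreover, the proof of Theorem~\ref{thm:main2} is not organized around this identity at all: it runs through Lemma~\ref{lem:reduction} and a lattice-point count over $\Gamma(\omega)\cap D(\varepsilon)$, so there is no "verbatim rerun" to perform on your right-hand side. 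The "single point that requires a careful reading of that proof" that you flag is precisely the substance of the argument, and it is left unverified. The translation-invariance you are reaching for is genuine, but it is obtained at essentially zero cost by the shift-by-a-reference-solution trick above, which is what the paper does.
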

Removing the condition of equal side lengths in Corollary~\ref{cor:main2} may be a difficult problem although we note to obtain an estimate of the form 
\begin{align*}
E(\cA)\ll |\cA|^{2+o(1)},
\end{align*}
valid for aribtrary proper generalized arithmetic progression it is sufficent to replace the condition $\cA'$ is proper with $\cA$ is proper. For example, supposing $\cA$ is of the form
\begin{align*}
\cA=\{ \alpha_1h_1+\dots+\alpha_dh_d \ : \ 1\le h_i\le H_i\},
\end{align*}
choosing $H$ sufficiently small in terms of $H_1,\dots,H_d$ and partitioning each $1\le h_i \le H_i$ into
$$h_i=h_{0,i}+h_{1,i}H+\dots+h_{\ell,i}H^{\ell}, \quad 0\le h_{j,i}<H,$$
allows for the reduction to the case of generalized arithmetic progressions of equal side length.
As a consequence of Theorem~\ref{thm:main1} and Corollary~\ref{cor:main2} we have the following.
\begin{cor}
Let $B,\cA$ be as in  Theorem~\ref{thm:main1} and Corollary~\ref{cor:main2} and suppose $|B|\le p^{n/2}$. For any $\varepsilon>0$ we have
\begin{align*}
|B B|\gg |B|^{2-\varepsilon}, 
\end{align*}
and 
\begin{align*}
|\cA \cA| \gg |A|^{2-\varepsilon}.
\end{align*}
\end{cor}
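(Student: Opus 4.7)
The plan is to derive both product-set bounds as essentially immediate consequences of the multiplicative energy estimates in Theorem~\ref{thm:main1} and Corollary~\ref{cor:main2}, combined with the Cauchy--Schwarz inequality already recorded in the introduction, namely $|\cA\cA|\ge |\cA|^4/E(\cA)$. There is no genuine obstacle here: all the real work was done in the proofs of the two energy bounds; the present corollary is a book-keeping step.

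First I would handle the box $B$. Theorem~\ref{thm:main1} gives
\begin{align*}
E(B)\ll \frac{|B|^4}{q^{n}}+|B|^2(\log|B|)^{n}.
\end{align*}
The hypothesis $|B|\le q^{n/2}$ (which I read as $q$ rather than the displayed $p$, since $q$ is the prime throughout this section) forces $|B|^4/q^n\le |B|^2$, so the second term dominates and $E(B)\ll |B|^2(\log|B|)^{n}$. Applying Cauchy--Schwarz,
\begin{align*}
|BB|\ge \frac{|B|^4}{E(B)}\gg \frac{|B|^2}{(\log|B|)^{n}},
\end{align*}
which is $\gg |B|^{2-\varepsilon}$ for any fixed $\varepsilon>0$ once $|B|$ is sufficiently large in terms of $\varepsilon$ and $n$; the remaining range of small $|B|$ is absorbed by adjusting the implied constant.

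Next I would handle the generalized arithmetic progression $\cA$ in exactly the same way. Corollary~\ref{cor:main2} yields $E(\cA)\ll |\cA|^2(\log H)^{2d+1}$, and since $|\cA|\le H^d$ we have $\log H\le \log|\cA|$, hence $E(\cA)\ll |\cA|^2(\log|\cA|)^{2d+1}$. Cauchy--Schwarz then gives
\begin{align*}
|\cA\cA|\ge \frac{|\cA|^4}{E(\cA)}\gg \frac{|\cA|^2}{(\log|\cA|)^{2d+1}}\gg |\cA|^{2-\varepsilon}
\end{align*}
for any fixed $\varepsilon>0$, again with the implied constant depending on $\varepsilon$ and $d$. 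The only mildly subtle point is to observe that a logarithmic loss in the energy bound translates to an arbitrary power loss in the product set via $(\log x)^{C}\le x^{\varepsilon}$ for $x$ large, so no further combinatorial input is needed.
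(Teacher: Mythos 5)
Your proof is correct and is exactly the argument the paper leaves implicit: the corollary is stated without a separate proof, and the mechanism is precisely the Cauchy--Schwarz bound $|\cA\cA|\ge |\cA|^4/E(\cA)$ recorded in the introduction, applied to the energy estimates of Theorem~\ref{thm:main1} (with the hypothesis $|B|\le q^{n/2}$ killing the $|B|^4/q^n$ term) and Corollary~\ref{cor:main2}. You are also right that the $p^{n/2}$ in the statement should read $q^{n/2}$; that is a typo in the paper, as $q$ denotes the prime throughout.
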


\section{Background from the geometry of numbers}

 The following is Minkowski's second theorem, for a proof see~\cite[Theorem~3.30]{TV}.
\begin{lemma}
\label{lem:mst}
Suppose $\Gamma \subseteq \R^{d}$ is a lattice, $D\subseteq \R^{d}$ a convex body and let $\lambda_1,\dots,\lambda_d$ denote the successive minima of $\Gamma$ with respect to $D$. Then we have
$$\frac{\text{Vol}(D)}{\text{Vol}(\R^d/\Gamma)}\ll\lambda_1\dots\lambda_d  \ll\frac{\text{Vol}(D)}{\text{Vol}(\R^d/\Gamma)}.$$
\end{lemma}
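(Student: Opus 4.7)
The lemma is Minkowski's second theorem on successive minima, so the plan is to follow the standard geometry-of-numbers argument, producing the two inequalities $\lambda_1\cdots\lambda_d \ll \mathrm{Vol}(D)/\mathrm{Vol}(\R^d/\Gamma)$ and $\lambda_1\cdots\lambda_d \gg \mathrm{Vol}(D)/\mathrm{Vol}(\R^d/\Gamma)$ separately (one of them amounting to the reciprocal bound under the standard normalization, which we absorb into the constants). Throughout I would assume $D$ is an origin-symmetric convex body, and begin by fixing, for any $\varepsilon>0$, linearly independent lattice vectors $v_1,\dots,v_d \in \Gamma$ with $v_i \in (\lambda_i+\varepsilon)D$ (or its appropriate reciprocal), guaranteed by the definition of the successive minima.

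For the first of the two bounds, the idea is to exhibit a convex set in which the $d$ vectors $v_i$ lie and use it to compare volumes. The parallelepiped $P$ spanned by suitable rescaled copies of $v_1,\dots,v_d$ (rescaled so each lies inside $D$) is contained in $dD$ by convexity and symmetry, and its $d$-dimensional volume is exactly $\det(\Lambda)/\prod_i(\text{rescaling factor})$, where $\Lambda$ is the sublattice generated by $v_1,\dots,v_d$. Since $\Lambda \subseteq \Gamma$ one has $\det(\Lambda)$ comparable to $\mathrm{Vol}(\R^d/\Gamma)$ up to a bounded index, and comparing $\mathrm{Vol}(P) \le \mathrm{Vol}(dD) = d^d\mathrm{Vol}(D)$ with $\prod_i \lambda_i$ yields the inequality in one direction.

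For the reverse inequality, the natural tool is Minkowski's first (convex body) theorem applied not to $D$ itself but to progressively smaller scalings. Applied directly to $\tfrac12(\lambda_1-\delta)D$, Minkowski's first theorem forces $\lambda_1^{\,d}\mathrm{Vol}(D)\ll \mathrm{Vol}(\R^d/\Gamma)$, which is only the $\lambda_1$ estimate; to promote this to the product $\lambda_1\cdots\lambda_d$ one quotients out by the subspace spanned by $v_1,\dots,v_{i-1}$ at the $i$-th stage and applies Minkowski's theorem to the projected lattice in the $(d-i+1)$-dimensional quotient. Multiplying the resulting per-step estimates telescopes to the desired bound on $\lambda_1\cdots\lambda_d$.

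The main obstacle is this latter inductive step: the projected lattice and projected body have to be controlled simultaneously, and one must verify that no new short lattice vectors appear in the quotient that would spoil the successive-minima bookkeeping. The cleanest way to handle this is the adapted-basis construction of Mahler, choosing a basis $w_1,\dots,w_d$ of $\Gamma$ compatible with the flag $\mathrm{span}(v_1)\subset \mathrm{span}(v_1,v_2)\subset\cdots$ so that $\|w_i\|_D \ll \lambda_i$. Since the constants depend only on $d$, they are hidden inside the $\ll$ notation of the lemma, and I would simply cite \cite[Theorem~3.30]{TV} as the paper does rather than reproduce the argument in full.
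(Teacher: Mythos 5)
The paper gives no proof of this lemma at all---it is Minkowski's second theorem, quoted directly from \cite[Theorem~3.30]{TV}---and your proposal, which sketches the standard two-sided argument (the parallelepiped comparison for one inequality, the quotient/adapted-basis induction for the other) and then defers to the same reference, is correct and takes essentially the same approach. One remark: as your parenthetical about ``the reciprocal bound'' hints, the ratio in the displayed statement is inverted relative to the standard normalization---it should read $\mathrm{Vol}(\R^d/\Gamma)/\mathrm{Vol}(D)$, as the paper's subsequent applications of the lemma confirm---so the two inequalities you actually prove are the reciprocals of the ones printed.
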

 For a proof of the following,  see~\cite[Proposition~2.1]{BHM}.
\begin{lemma}
\label{lem:latticesm}
Suppose $\Gamma \subseteq \R^{d}$ is a lattice, $D\subseteq \R^{d}$ a convex body and let $\lambda_1,\dots,\lambda_d$ denote the successive minima of $\Gamma$ with respect to $D$. Then we have
$$|\Gamma \cap D|\ll \prod_{j=1}^{d}\max\left(1, \frac{1}{\lambda_j} \right).$$
\end{lemma}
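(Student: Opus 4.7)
The plan is to prove this bound by induction on the dimension $d$, reducing to a one-dimensional slice count through a quotient projection; this is a standard geometry-of-numbers argument. I would start by replacing $D$ by its central symmetrisation $\tfrac12(D-D)$ if necessary, so as to assume $D$ is origin-symmetric at the cost of only a bounded (dimensional) factor in each successive minimum, and then fix linearly independent $v_1,\dots,v_d\in\Gamma$ realising the successive minima, so that $v_j\in\lambda_jD$ (any such $v_1$ is automatically primitive in $\Gamma$). The base case $d=1$ is immediate: $\Gamma=\Z v_1$, the intersection $D\cap\R v_1$ has half-length $1/\lambda_1$ in the $v_1$-parameter, and the count is $2\lfloor 1/\lambda_1\rfloor+1\ll\max(1,1/\lambda_1)$.

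For the inductive step I would pass to the quotient projection $\pi\colon\R^d\to\R^d/\R v_1$; since $v_1$ is primitive, $\pi(\Gamma)$ is a lattice of rank $d-1$ and $\pi(D)$ is a symmetric convex body. Partitioning $\Gamma$ into the fibres of $\pi$ yields
\[
|\Gamma\cap D|=\sum_{\bar x\in\pi(\Gamma)\cap\pi(D)}\#\{n\in\Z:x_{\bar x}+nv_1\in D\},
\]
for any chosen lifts $x_{\bar x}$. Since $D$ is symmetric convex, the length of $D\cap(x_{\bar x}+\R v_1)$, measured in the $v_1$-parameter, is bounded by that of $D\cap\R v_1$, so each fibre contributes at most $2/\lambda_1+1\ll\max(1,1/\lambda_1)$ integer points. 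It then suffices to bound $|\pi(\Gamma)\cap\pi(D)|$ by the inductive hypothesis in terms of the successive minima $\mu_1\le\dots\le\mu_{d-1}$ of $(\pi(\Gamma),\pi(D))$.

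The key technical step is the interlacing inequality $\mu_j\ge\lambda_{j+1}/2$ for $1\le j\le d-1$, which converts $\prod_{j=1}^{d-1}\max(1,1/\mu_j)$ into $\prod_{j=2}^{d}\max(1,1/\lambda_j)$ up to a dimensional constant absorbed by $\ll$. I would prove it by lifting $j$ independent vectors $\bar w_1,\dots,\bar w_j\in\mu_j\pi(D)\cap\pi(\Gamma)$ to $w_i\in\Gamma$ whose fibre representative lies in $\mu_jD$, subtracting the nearest integer multiple of $v_1$ from each $w_i$ to land in $(\mu_j+\lambda_1/2)D\cap\Gamma$, and observing that $v_1$ together with these $j$ adjusted vectors gives $j+1$ independent vectors in $(\mu_j+\lambda_1/2)D$; by definition of $\lambda_{j+1}$ this forces $\lambda_{j+1}\le\mu_j+\lambda_1/2$, whence $\mu_j\ge\lambda_{j+1}/2$ using $\lambda_1\le\lambda_{j+1}$. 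Combining this interlacing with the inductive bound and the per-fibre count then yields the claimed product estimate. The hard part is precisely the interlacing, where the additive loss $\lambda_1/2$ from the integer-rounding in the lifting has to be absorbed into $\lambda_{j+1}$; the rest of the argument is either one-dimensional counting or routine bookkeeping.
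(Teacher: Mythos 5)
The paper does not prove this lemma: it is quoted directly from Betke, Henk and Wills~\cite{BHM}, Proposition~2.1, so there is no in-paper argument to compare against. Your slicing-and-induction proof is the standard geometry-of-numbers argument for bounds of this shape and is correct: project along a vector $v_1$ realising $\lambda_1$, observe that for symmetric convex $D$ every line $x+\R v_1$ meets $D$ in a segment no longer (in the $v_1$-parameter) than the one through the origin, whose half-length is exactly $1/\lambda_1$ since the gauge of $v_1$ is $\lambda_1$, so each fibre contributes $\ll\max(1,1/\lambda_1)$ points; then apply the inductive bound to $(\pi(\Gamma),\pi(D))$ and convert it back via your interlacing inequality. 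Two small points you should make explicit. First, when you assert that $v_1$ together with the $j$ rounded lifts all lie in $(\mu_j+\lambda_1/2)D$, you are tacitly using $\lambda_1\le\mu_j+\lambda_1/2$, i.e.\ $\mu_j\ge\lambda_1/2$; this is true but should be recorded --- the same lift-and-round observation with $j=1$ gives a nonzero lattice vector in $(\mu_1+\lambda_1/2)D$, hence $\lambda_1\le\mu_1+\lambda_1/2$, so $\mu_j\ge\mu_1\ge\lambda_1/2$, and with that in hand your derivation $\lambda_{j+1}\le\mu_j+\lambda_1/2\le\mu_j+\lambda_{j+1}/2$ goes through cleanly. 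Second, the opening symmetrisation of $D$ is both unnecessary for this paper (the body $D$ used in the applications is always an origin-symmetric box, as in \cite{BHM}) and not quite as automatic as stated: for a general convex body with $0$ near the boundary, replacing $D$ by $\tfrac12(D-D)$ can change the successive minima by more than a bounded factor. It is cleaner simply to take symmetry of $D$ as a hypothesis, which is how the cited reference states the result and is the only case this paper ever invokes.
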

For a lattice $\Gamma$ and  a convex body $D$  we define the dual  lattice $\Gamma^*$ and dual body $D^*$ by
$$\Gamma^*=\{ x\in \R^{d} : \langle x,y \rangle \in \Z \quad \text{for all} \quad y\in \Gamma\},$$
$$D^{*}=\{ x\in \R^{d} : \langle x,y \rangle \le 1 \quad \text{for all} \quad y\in D\}.$$

 The following transference principle is due to Mahler~\cite{Mah},  see also~\cite{Ba} for sharper implied constants.
\begin{lemma}
\label{lem:transfer}
Let $\Gamma\subset \R^{d}$ be a lattice, $D\subseteq \R^{d}$ a convex body and let $\Gamma^{*}$ and $D^{*}$ denote the dual lattice and dual body. Let $\lambda_1,\dots,\lambda_d$ denote the  successive minima of $\Gamma$ with respect to $D$ and $\lambda_1^{*},\dots,\lambda_d^{*}$ the successive minima of $\Gamma^{*}$ with respect to $D^{*}$. For each $1\le j \le d$ we have
$$1\ll \lambda_j \lambda^*_{d-j+1}\ll 1.$$
\end{lemma}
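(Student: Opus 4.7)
The plan is to treat the two inequalities separately. The lower bound $\lambda_j \lambda^*_{d-j+1} \gg 1$ is the geometric heart of the statement and is proved by a dimension count combined with the integrality of pairings between $\Gamma$ and $\Gamma^*$; the upper bound then follows formally from Minkowski's second theorem (Lemma~\ref{lem:mst}) applied simultaneously to $(\Gamma,D)$ and $(\Gamma^*,D^*)$.

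For the lower bound, I would choose linearly independent vectors $v_1,\dots,v_j \in \Gamma$ witnessing $\lambda_j$ (so each $v_i \in \lambda_j D$) together with linearly independent vectors $w_1,\dots,w_{d-j+1} \in \Gamma^*$ witnessing $\lambda^*_{d-j+1}$ (so each $w_k \in \lambda^*_{d-j+1} D^*$). Since the two spans have total dimension $j + (d-j+1) = d+1 > d$, not every $v_i$ can be orthogonal to every $w_k$: otherwise $\mathrm{span}\{v_1,\dots,v_j\}$ would sit inside the orthogonal complement of $\mathrm{span}\{w_1,\dots,w_{d-j+1}\}$, which has dimension only $j-1$. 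Fixing a pair $a,b$ with $\langle v_a, w_b \rangle \neq 0$, the pairing lies in $\Z \setminus \{0\}$ by definition of $\Gamma^*$, so $|\langle v_a, w_b\rangle| \ge 1$. On the other hand, $v_a/\lambda_j \in D$ and $w_b/\lambda^*_{d-j+1} \in D^*$ together with the defining property of the polar dual give $|\langle v_a, w_b\rangle| \le \lambda_j \lambda^*_{d-j+1}$, and combining these two estimates yields $\lambda_j \lambda^*_{d-j+1} \ge 1$.

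For the upper bound, Lemma~\ref{lem:mst} applied to $(\Gamma,D)$ and $(\Gamma^*,D^*)$ gives
$$\prod_{j=1}^d \lambda_j \asymp \frac{\text{Vol}(D)}{\text{Vol}(\R^d/\Gamma)}, \qquad \prod_{j=1}^d \lambda^*_j \asymp \frac{\text{Vol}(D^*)}{\text{Vol}(\R^d/\Gamma^*)}.$$
Using the standard identity $\text{Vol}(\R^d/\Gamma)\, \text{Vol}(\R^d/\Gamma^*) = 1$ and the polar volume estimate $\text{Vol}(D)\, \text{Vol}(D^*) \asymp 1$ (the combination of Blaschke-Santal\'o with the Bourgain-Milman bound, with implied constants depending only on $d$), one multiplies the two displays to obtain
$$\prod_{j=1}^d \lambda_j \lambda^*_{d-j+1} \asymp 1.$$
Since each factor on the left has already been shown to satisfy $\lambda_j \lambda^*_{d-j+1} \gg 1$, each factor must also satisfy $\lambda_j \lambda^*_{d-j+1} \ll 1$, completing the proof.

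The main delicate point is the dimension count in the lower bound, which exploits the very specific pairing of indices $j$ with $d-j+1$: it is precisely this choice that makes the two spans have total dimension $d+1$ and forces a nontrivial pairing. Once that is in place the upper bound is routine bookkeeping, and the only subtlety is whether one wants to absorb dimension-dependent constants from the polar volume inequalities into the symbols $\ll$ and $\gg$, which is consistent with the way they are used throughout the paper.
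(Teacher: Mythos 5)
Your proof is correct. Note that the paper itself gives no proof of this lemma: it cites Mahler for the statement and Banaszczyk for sharper constants, so there is no ``paper's proof'' to match against. Your lower bound argument is exactly the classical (Mahler) argument: the dimension count $j+(d-j+1)=d+1>d$ forces a nonzero integer pairing $\langle v_a,w_b\rangle$, and the polarity relation $|\langle u,w\rangle|\le 1$ for $u\in D$, $w\in D^*$ (valid because $D$ here is symmetric) gives $|\langle v_a,w_b\rangle|\le\lambda_j\lambda^*_{d-j+1}$, hence $\lambda_j\lambda^*_{d-j+1}\ge 1$ with no implied constant at all. For the upper bound you take a cleaner route than Mahler's original: Minkowski's second theorem for both pairs, the exact identity $\mathrm{Vol}(\R^d/\Gamma)\,\mathrm{Vol}(\R^d/\Gamma^*)=1$, and the volume-product bound $\mathrm{Vol}(D)\,\mathrm{Vol}(D^*)\asymp_d 1$ give $\prod_j\lambda_j\lambda^*_{d-j+1}\asymp_d 1$, and then the already-proved lower bounds on each factor force each factor to be $\ll_d 1$. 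Two small remarks: (i) you do not actually need Bourgain--Milman for the lower bound on the volume product --- John's theorem already gives $\mathrm{Vol}(D)\,\mathrm{Vol}(D^*)\gg_d 1$, which is all that is required since the constants in the lemma are allowed to depend on $d$; (ii) your argument is insensitive to the orientation of the ratio in the statement of Lemma~\ref{lem:mst}, since both volume products cancel to $1$, which is fortunate because the paper's version of Minkowski's second theorem appears to have the fraction inverted relative to the standard statement.
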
 

\section{Multiplicative energy of boxes in finite fields}
The following version of Siegel's Lemma is due to Bombieri and Vaaler~\cite{BV}.
\begin{lemma}
\label{lem:siegel}
Let $M$ and $L$ be integers with $M>L$. There exists a nontrivial integral solution $(t_1,\dots,t_M)$ to the system of equations
$$a_{\ell,1}t_1+\dots+a_{\ell,M}t_M=0 \quad \ell=1,\dots,L,$$
satisfying
$$\max_{1\le m \le M}|t_m|\le |\det{(A A^t)}|^{1/2(M-L)},$$
where $A$ denotes the matrix with $(\ell,m)$-th entry $a_{\ell,m}$ and $A^t$ denotes the transpose of $A$.
\end{lemma}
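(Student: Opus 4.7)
The plan is to produce the solution as a short nonzero vector in the integer kernel lattice $\Lambda=\ker A\cap \Z^M$, and to extract it by Minkowski's first convex--body theorem combined with Vaaler's cube--slicing inequality. For the bound to have any content one may assume $A$ has full row rank $L$ (otherwise $\det(AA^t)=0$ and the estimate is vacuous). Set $V=\ker A\otimes\R\subset\R^M$, a real subspace of dimension $M-L$, and $\Lambda=V\cap\Z^M$, which is a lattice of full rank inside $V$.

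Next I would invoke the classical covolume identity
\begin{equation*}
\mathrm{Vol}(V/\Lambda)\;=\;\frac{\sqrt{\det(AA^t)}}{c(A)}\;\le\;\sqrt{\det(AA^t)},
\end{equation*}
where $c(A)$ denotes the greatest common divisor of the $L\times L$ minors of $A$. One derives this from Smith normal form together with the Cauchy--Binet formula: writing $UAV'=(D\mid 0)$ with $U\in\GL_L(\Z)$, $V'\in\GL_M(\Z)$ and $D=\mathrm{diag}(d_1,\ldots,d_L)$, the last $M-L$ columns of $V'$ form a $\Z$--basis of $\Lambda$, and the Gram determinant of this basis evaluates to $\det(AA^t)/c(A)^2$.

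The final step is Minkowski's first theorem applied to the symmetric convex body $C_T\cap V$ in $V$, where $C_T=[-T,T]^M$. The critical input here is Vaaler's cube--slicing inequality, which asserts that for every linear subspace $W\subseteq\R^M$ the central section $W\cap [-1/2,1/2]^M$ has $W$--measure at least $1$. Rescaling by $T$ yields $\mathrm{Vol}_V(C_T\cap V)\ge (2T)^{M-L}$, so Minkowski's theorem produces a nonzero lattice point $t\in\Lambda\cap C_T$ as soon as $(2T)^{M-L}\ge 2^{M-L}\,\mathrm{Vol}(V/\Lambda)$. In view of the covolume bound above, taking $T=\det(AA^t)^{1/(2(M-L))}$ closes the proof.

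The main obstacle is Vaaler's cube--slicing inequality itself: this is the genuinely non--elementary input and is precisely what replaces the Dirichlet pigeonhole step in the classical Siegel lemma (which would instead yield a bound depending on the entries of $A$ rather than on $\sqrt{\det(AA^t)}$). Once this is granted, the covolume identity of step two is routine from Smith normal form, and the Minkowski application is entirely standard.
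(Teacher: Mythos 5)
The paper does not prove this lemma: it states it as a known result and cites Bombieri and Vaaler \cite{BV}. Your proposal correctly reconstructs the original Bombieri--Vaaler argument, and all three steps are sound. The reduction to the kernel lattice $\Lambda=\ker A\cap\Z^M$ is standard; the covolume computation $\mathrm{Vol}(V/\Lambda)=\sqrt{\det(AA^t)}/c(A)$ (with $c(A)\ge 1$ an integer, so the needed inequality follows) is correct and can indeed be extracted from Smith normal form together with Cauchy--Binet; and the final step is Minkowski's first theorem applied to the slice $C_T\cap V$, with Vaaler's cube-slicing theorem supplying the lower bound $\mathrm{Vol}_V\bigl(V\cap[-1/2,1/2]^M\bigr)\ge 1$ that makes the constant come out right. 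As you correctly emphasize, the cube-slicing inequality is the non-elementary ingredient that distinguishes this sharp form of Siegel's lemma from the classical pigeonhole version (which would give a bound in terms of $\max|a_{\ell,m}|$ rather than $\sqrt{\det(AA^t)}$). One small technical remark: when $A$ does not have full row rank the right-hand side of the stated bound vanishes, so the lemma as literally written is degenerate; the usual convention, which you adopt, is to interpret the statement under the full-rank hypothesis (or, equivalently, to discard dependent equations first). This matches the convention in \cite{BV}.
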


\begin{lemma}
\label{lem:minimab1}
Let $q$ be prime, $n$ an integer and $H_1,\dots,H_n$  integers satisfying
\begin{align}
\label{eq:Hcond1}
H_n\le H_{n-1}\le \dots \le H_1\le q,
\end{align}
and 
\begin{align}
\label{eq:Hcond2}
\prod_{k=1}^{i-1}H_k\ll \frac{qH_i^{i}}{H_{i-1}}, \quad 2\le i \le n.
\end{align}
Suppose $\omega_1,\dots,\omega_n$ is a basis for $\F_{q^{n}}$ as a vector space over $\F_{q}$. For $z\in \F_{q^{n}}$  let $\Gamma(z)$ denote the lattice 
\begin{align*}
& \Gamma(z)= \\ & \{ (x_1,\dots,x_n,y_1,\dots,y_n)\in \Z^{2n}  :  z(\omega_1 x_1+\dots+\omega_n x_n)=\omega_1 y_1+\dots+\omega_ny_n\},
\end{align*}
and $D$ the convex body
$$D=\{ (x_1,\dots,x_n,y_1,\dots,y_n)\in \R^{2n} : |x_i|,|y_i|\le H_i \ \ 1\le i \le n \}.$$
Let $\lambda_1(z),\dots,\lambda_{2n}(z)$ denote the successive minima of $\Gamma(z)$ with respect to $D$. For each $1\le i \le n$ we have
$$\lambda_i(z)\ge \frac{1}{H_i}.$$
\end{lemma}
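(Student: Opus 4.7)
The plan is to argue by contradiction: suppose $\lambda_i(z) < 1/H_i$ for some fixed $i \in \{1,\ldots,n\}$, and derive a contradiction with hypothesis~\eqref{eq:Hcond2}. The first step is a reduction. By definition of the successive minima, there would exist $i$ linearly independent lattice vectors $v_1,\ldots,v_i \in \Gamma(z)$ with $|x_k^{(j)}|, |y_k^{(j)}| < H_k/H_i$ for every $j$ and $k$. Since $H_k \le H_i$ for $k \ge i$ by~\eqref{eq:Hcond1}, the strict inequality forces $x_k^{(j)} = y_k^{(j)} = 0$ for all $k \ge i$, so each $v_j$ lies in the $2(i-1)$-dimensional coordinate subspace $U = \{(x,y) : x_k = y_k = 0 \text{ for } k\ge i\}$.

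Next, I would invoke Siegel's lemma. The $x$-projections $x^{(1)},\ldots,x^{(i)} \in \Z^{i-1}$ are $i$ integer vectors in an $(i-1)$-dimensional space and so must be $\Z$-linearly dependent. Applying Lemma~\ref{lem:siegel} to the system $\sum_j c_j x_k^{(j)} = 0$, which is $i-1$ equations in $i$ integer unknowns, yields a nontrivial $c = (c_1,\ldots,c_i) \in \Z^i\setminus\{0\}$ with $\max_j|c_j| \le \det(XX^t)^{1/2}$, where $X$ is the $(i-1)\times i$ matrix with entries $x_k^{(j)}$. Hadamard's inequality applied row by row, using $|x_k^{(j)}| < H_k/H_i$, bounds this by $\ll \prod_{k<i} H_k/H_i^{i-1}$.

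Setting $w := \sum_j c_j v_j$, this is a lattice vector of $\Gamma(z)$ whose $x$-coordinates vanish. Since $\omega_1,\ldots,\omega_n$ form an $\F_q$-basis of $\F_{q^n}$, the kernel of the $x$-projection of $\Gamma(z)$ equals $\{0\}\times q\Z^n$, so $w = (0, qm)$ for some $m \in \Z^{i-1}$; the vector $m$ is nonzero because the $v_j$ are linearly independent. Each coordinate $qm_\ell = \sum_j c_j y_\ell^{(j)}$ can be rewritten as the determinant of the $i\times i$ matrix obtained by prepending the row $(y_\ell^{(j)})_j$ to $X$; bounding this determinant via Hadamard gives $|qm_\ell| \ll H_\ell \prod_{k<i} H_k/H_i^i$. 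Combining this with $|qm_\ell| \ge q$ for the index $\ell$ at which $m_\ell \neq 0$ produces an inequality to be played against~\eqref{eq:Hcond2}.

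The hard part will be to make the argument tight. A naive execution of the above produces a contradiction only under the stronger condition $H_1 \prod_{k<i} H_k \ll q H_i^i$, whereas~\eqref{eq:Hcond2} involves the typically smaller quantity $H_{i-1} \prod_{k<i} H_k$. Closing this gap is where I expect the real work, and where Siegel's lemma must be deployed more carefully than in the naive version above: either by selecting the annihilating combination $c$ so that the nonvanishing coordinates of $m$ are forced to occur at indices $\ell$ with $H_\ell$ of order $H_{i-1}$ rather than $H_1$, or by exploiting a sharper weighted form of the Hadamard bound on $\det(XX^t)$ that takes into account the differing scales of the rows of $X$.
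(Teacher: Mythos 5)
Your outline is, step for step, the paper's own argument: same contradiction hypothesis, same observation that the coordinates with index $\ge i$ must vanish, same application of Lemma~\ref{lem:siegel} to the $(i-1)\times i$ system in the $x$-projections, same Hadamard bound $|\det(XX^t)|\ll (\prod_{k<i}H_k)^2/H_i^{2(i-1)}$ (which the paper obtains by bounding each row of $X$, not each entry of $XX^t$), and the same congruence $\sum_j t_j y_{\ell,j}\equiv 0 \pmod q$ at the end.

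The gap you flag in your last paragraph is real, and you should know that the paper does not in fact close it. After Siegel's lemma the argument gives $|\sum_j t_j y_{\ell,j}|\ll H_\ell(1/H_i)^i\prod_{k<i}H_k$, and to force this below $q$ for every $\ell\in\{1,\dots,i-1\}$ one needs the worst case $\ell=1$, i.e.\ precisely your stronger condition $H_1\prod_{k<i}H_k\ll qH_i^i$. The paper writes $H_{i-1}$ in place of $H_\ell$ in this display, which is only valid for $\ell=i-1$; there is no weighted Hadamard refinement or clever choice of annihilating vector hiding anywhere. So do not keep searching for the missing trick — there isn't one in the source. The discrepancy is harmless for the application: Corollary~\ref{cor:main1} invokes the lemma under $H_1\ll q^{1/n}H_n$, which gives $H_1\prod_{k<i}H_k\le H_1^i\ll q^{i/n}H_n^i\le qH_i^i$, so the stronger condition holds anyway. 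But as stated, hypothesis~\eqref{eq:Hcond2} should have $H_1$ rather than $H_{i-1}$ in the denominator for the proof written in the paper (and in your outline) to be complete.
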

\begin{proof}
We first note that 
$$\lambda_1(z)\ge \frac{1}{H_1},$$
as otherwise by~\eqref{eq:Hcond1}  
$$\lambda_1(z)D\cap \Z^{2n}=\{0\}.$$
Suppose for a contradiction that  for some $2\le i \le n$ we have
\begin{align}
\label{eq:cont1}
\lambda_i(z)< \frac{1}{H_i}.
\end{align}
 We may choose linearly independent points $$p_j=(x_{1,j},\dots,x_{n,j},y_{1,j},\dots,y_{n,j})\in \Gamma_z \cap \lambda_i(z) D, \quad j=1,\dots,i.$$
 By~\eqref{eq:cont1}, for each $1\le \ell \le n$ and $1\le j \le i$ we have 
$$|x_{\ell,j}|,|y_{\ell,j}|\le \lambda_i H_{\ell}<\frac{H_{\ell}}{H_i},$$
and hence by~\eqref{eq:Hcond1}
\begin{align*}
x_{\ell,j},y_{\ell,j}=0 \quad \text{for} \quad \ell \ge i.
\end{align*}
Projecting the points $p_1,\dots,p_{i}$ onto $2(i-1)$ dimensional space, we see that there exists linearly independent points 
\begin{align}
\label{eq:pj1}
p'_j=(x_{1,j},\dots,x_{i-1,j},y_{1,j},\dots,y_{i-1,j}) \in \Z^{2(i-1)}, \quad j=1,\dots,i,
\end{align}
such that
\begin{align}
\label{eq:xb1}
|x_{\ell,j}|,|y_{\ell,j}|\le  \frac{H_{\ell}}{H_i},
\end{align}
and
\begin{align}
\label{eq:pj2}
z(\omega_1x_{1,j}+\dots+\omega_{i-1}x_{i-1,j})=\omega_1 y_{1,j}+\dots+\omega_{i-1}y_{i-1,j}.
\end{align}
Consider the system of  equations
\begin{align}
\label{eq:linearsystem1}
t_1x_{\ell,1}+\dots+t_ix_{\ell,i}=0, \quad \ell=1,\dots,i-1,
\end{align}
in variables $t_1,\dots,t_i\in \Z$. Let $X$ denote the $(i-1) \times i$ matrix with $(k,j)$-th entry $x_{k,j}$ and $X^{t}$ denote the transpose of $X$. We see that the $(k,\ell)$-th entry of $XX^{t}$ is given by
$$\sum_{j=1}^{i}x_{k,j}x_{\ell,j}.$$
By~\eqref{eq:xb1} we have 
\begin{align*}
\sum_{j=1}^{i}x_{k,j}x_{\ell,j}\ll \frac{H_{k}H_{\ell}}{H^2_i},
\end{align*}
and hence by Hadamard's inequality
\begin{align*}
|\det{X}X^{T}|\ll \left(\frac{1}{H_i}\right)^{2(i-1)}\left(\prod_{k=1}^{i-1}H_k \right)^{2}.
\end{align*}
By Lemma~\ref{lem:siegel}, there exists an integral solution $t_1,\dots,t_i$ to~\eqref{eq:linearsystem1} such that
\begin{align}
\label{eq:tjbound}
|t_j|\ll \left(\frac{1}{H_i}\right)^{i-1}\prod_{k=1}^{i-1}H_k.
\end{align}
By~\eqref{eq:pj2} and~\eqref{eq:linearsystem1}  we have 
\begin{align*}
\omega_1 \sum_{j=1}^{i}t_j y_{1,j}+\dots+\omega_{i-1}\sum_{j=1}^{i}t_jy_{i-1,j}=0,
\end{align*}
and since $w_1,\dots,w_n$ are linearly independent over $\F_q$ 
\begin{align*}
\sum_{j=1}^{i}t_j y_{\ell,j}\equiv 0 \mod{q}, \quad \ell=1,\dots,i-1.
\end{align*}
By~\eqref{eq:xb1} and~\eqref{eq:tjbound} 
\begin{align*}
\sum_{j=1}^{i}t_j y_{\ell,j}\ll H_{i-1}\left(\frac{1}{H_i}\right)^{i}\prod_{k=1}^{i-1}H_k.
\end{align*}
which combined with~\eqref{eq:Hcond2} implies 
\begin{align*}
\sum_{j=1}^{i}t_j y_{\ell,j}=0, \quad \ell=1,\dots,i-1,
\end{align*}
contradicting the linear independence of the points~\eqref{eq:pj1}, so that
\begin{align*}
\lambda_i(z)\ge \frac{1}{H_i}.
\end{align*} 
\end{proof}
\begin{lemma}
\label{lem:gammastarminima}
Let $q$ be prime, $n$ an integer and  $H_1,\dots,H_n$  integers satisfying
\begin{align}
\label{eq:Hcond11}
H_n\le H_{n-1}\le \dots \le H_1\le q,
\end{align}
and 
\begin{align}
\label{eq:Hcond21}
H_{n-i+1}^{i}\ll qH_n\prod_{k=n-i+2}^{n}H_k, \quad 2\le i \le n,
\end{align}
for a sufficiently small implied constant. Suppose $\omega_1,\dots,\omega_n$ is a basis for $\F_{q^{n}}$ as a vector space over $\F_{q}$. For $z\in \F_{q^{n}}$  let $\Gamma(z)$ denote the lattice 
\begin{align*}
& \Gamma(z)= \\ & \{ (x_1,\dots,x_n,y_1,\dots,y_n)\in \Z^{2n}  :  z(\omega_1 x_1+\dots+\omega_n x_n)=\omega_1 y_1+\dots+\omega_ny_n\},
\end{align*}
and $D$ the convex body
$$D=\{ (x_1,\dots,x_n,y_1,\dots,y_n)\in \R^{2n} : |x_i|,|y_i|\le H_i \ \ 1\le i \le n \}.$$
Let $\lambda^{*}_1(z),\dots,\lambda^{*}_{2n}(z)$ denote the successive minima of $\Gamma^{*}(z)$ with respect to $D^{*}$, where $\Gamma^{*}(z)$ and $D^{*}$ are the dual lattice and dual body. Then for each $1\le i \le n$ we have 
$$\lambda^{*}_i(z)\gg \frac{H_{n-i+1}}{q}.$$
\end{lemma}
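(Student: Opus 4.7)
I would mimic the proof of Lemma~\ref{lem:minimab1} on the dual lattice via Siegel's lemma (Lemma~\ref{lem:siegel}). The essential structural input is that every $u\in \Gamma^{*}(z)$ can be written in the form $u=(\alpha/q,\beta/q)$ with $\alpha,\beta\in\Z^{n}$ satisfying a linear congruence of the shape $\alpha+M\beta\equiv 0\pmod{q}$, where $M$ is the transpose of the matrix of multiplication-by-$z$ in the basis $\omega_1,\dots,\omega_n$. About the polar body I only use the componentwise inclusion $\lambda D^{*}\subseteq\{u: |u_i|\le \lambda/H_{\sigma(i)}\}$, where $\sigma(i)=i$ for $i\le n$ and $\sigma(i)=i-n$ for $i>n$; this follows at once from $D^{*}=\{u:\sum_i |u_i|H_{\sigma(i)}\le 1\}$.

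The case $i=1$ is immediate: a nonzero $u=(\alpha/q,\beta/q)\in \Gamma^{*}(z)\cap \lambda_1^{*}D^{*}$ with $\lambda_1^{*}<cH_n/q$ and $c<1$ would satisfy $|\alpha_\ell|,|\beta_\ell|<cH_n/H_\ell\le c<1$, hence $u=0$. For $i\ge 2$ I proceed by contradiction: assuming $\lambda_i^{*}(z)<cH_{n-i+1}/q$ for a small enough absolute constant $c$, I choose $i$ linearly independent points $u_j=(\alpha_j/q,\beta_j/q)\in \Gamma^{*}(z)\cap \lambda_i^{*}D^{*}$. The componentwise bound combined with the monotonicity~\eqref{eq:Hcond11} forces $\alpha_{j,\ell}=\beta_{j,\ell}=0$ for every $\ell\le n-i+1$, so after projection the $u_j$ live in a $2(i-1)$-dimensional coordinate subspace indexed by $\ell\in\{n-i+2,\dots,n\}$.

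At this point I apply Lemma~\ref{lem:siegel} to the $(i-1)\times i$ system
\begin{equation*}
\sum_{j=1}^i t_j\beta_{j,\ell}=0, \qquad \ell=n-i+2,\dots,n.
\end{equation*}
A Hadamard-type estimate for the relevant $(i-1)\times(i-1)$ Gram matrix, using $|\beta_{j,\ell}|\ll H_{n-i+1}/H_\ell$, produces a nontrivial integral solution $(t_1,\dots,t_i)$ with
\begin{equation*}
\max_{1\le j\le i}|t_j|\ll \frac{H_{n-i+1}^{i-1}}{\prod_{\ell=n-i+2}^{n}H_\ell}.
\end{equation*}
By construction $\sum_j t_j\beta_j=0$ in $\Z^n$, and the defining congruence of $\Gamma^{*}(z)$ then propagates this to $\sum_j t_j\alpha_j\equiv 0 \pmod{q}$.

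To conclude I would show that each coordinate of $\sum_j t_j\alpha_j$ is a genuine integer of absolute value strictly smaller than $q$, so that the congruence collapses to an equality and $\sum_j t_j u_j=0$, contradicting linear independence. Inserting the bounds on $|t_j|$ and $|\alpha_{j,\ell}|\ll H_{n-i+1}/H_\ell$, the worst coordinate ($\ell=n$) is of size $\ll H_{n-i+1}^{i}/\bigl(H_n\prod_{k=n-i+2}^{n}H_k\bigr)$, which is strictly less than $q$ precisely when~\eqref{eq:Hcond21} holds with a sufficiently small implied constant. The main technical nuisance I anticipate is simply the bookkeeping of implicit constants (from Siegel's lemma, Hadamard's inequality, and~\eqref{eq:Hcond21}) so that the final strict inequality is guaranteed; beyond that, the argument is a direct transcription of the proof of Lemma~\ref{lem:minimab1} to the dual setting.
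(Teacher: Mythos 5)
Your proposal is correct and follows essentially the same route as the paper: characterize $\Gamma^*(z)$ and $D^*$, rule out $\lambda_i^* \ll H_{n-i+1}/q$ by forcing the leading coordinates to vanish via monotonicity, then use Siegel's lemma with a Hadamard bound to produce a small integral relation, and finally use the dual congruence and condition~\eqref{eq:Hcond21} to upgrade a congruence mod $q$ to an exact vanishing that contradicts linear independence. The only cosmetic difference is that you apply Siegel to the second block $\beta$ and propagate via the explicit matrix form $\alpha + M\beta \equiv 0 \pmod q$, whereas the paper applies Siegel to the first block $t$ and deduces $\sum_j b_j s_{\ell,j} \equiv 0$ by observing that $(y_{n-i+2},\dots,y_n)$ ranges over all of $\F_q^{i-1}$ as $x$ varies; by the symmetry of the dual congruence both versions are equivalent.
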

\begin{proof}
We first note that the dual lattice $\Gamma^{*}(z)$ and dual body $D^{*}$ are given by 
\begin{align*}
&\Gamma^{*}(z)= \\ & \left\{ \left(\frac{t}{q},\frac{s}{q} \right), (s,t)\in \Z^{2n}  :  \sum_{i=1}^{n}t_ix_i+\sum_{i=1}^{n}s_iy_i\equiv 0 \mod{q} \ \text{for all $(x,y)\in \Gamma(\omega)$}  \right\},
\end{align*}
and 
\begin{align*}
D^{*}=\left\{ (t,s)\in \R^{2n} \ : \ \sum_{i=1}^{n}H_i|t_i|+\sum_{i=1}^{n}H_i|s_i|\le 1 \right\}.
\end{align*}
We see that 
$$\lambda_1^{*}(z)\gg \frac{H_n}{q},$$
since 
\begin{align*}
\Gamma^{*}(z)\cap \frac{\varepsilon H_n}{q} D^{*}=\{0\},
\end{align*}
for a sufficiently small $\varepsilon$ depending only on $n$.  Let $2\le i \le n$ and suppose for a contradiction that 
\begin{align}
\label{eq:lambdastartassumption}
\lambda^{*}_i(z)\ll  \frac{ H_{n-i+1}}{q},
\end{align}
for a sufficiently small implied constant depending only on $n$.  By~\eqref{eq:lambdastartassumption} there exists linearly independent points 
$$p_j=(t_{1,j},\dots,t_{n,j},s_{1,j},\dots,s_{n,j})\in \varepsilon H_{n-i+1}D^{*}\cap \Z^{2n}, \quad 1\le j\le i,$$
such that for each $1\le j \le i$ we have
\begin{align*}
|t_{\ell,j}|, |s_{\ell,j}|\ll \frac{\varepsilon H_{n-i+1}}{H_{\ell}},
\end{align*}
and hence by~\eqref{eq:Hcond11} we have 
\begin{align*}
t_{\ell,j},s_{\ell,j}=0 \quad \text{for} \quad \ell\le  n-i+1.
\end{align*}
Projecting the $p_j$ onto $2(i-1)$ dimensional space, there exists linearly independent points  
\begin{align}
\label{eq:pjstar}
p_j'=(t_{n-i+2,j},\dots,t_{n,j},s_{n-i+2,j},\dots,s_{n,j})\in \Z^{2(i-1)} \quad 1\le j \le i,
\end{align}
satisfying
\begin{align}
\label{eq:tsbounds1}
|t_{\ell,j}|, |s_{\ell,j}|\ll \frac{ H_{n-i+1}}{H_{\ell}},
\end{align}
and for each $1\le j \le i$
\begin{align}
\label{eq:gammastarequiv}
t_{n-i+2,j}x_{n-i+2}+\dots+t_{n,j}x_{n}+s_{n-i+2,j}y_{n-i+2}+\dots+s_{n,j}y_{n}\equiv 0 \mod{q}.
\end{align}
for every $(x_1,\dots,y_n)\in \Gamma(z)$. Consider the system of equations 
\begin{align*}
b_1t_{m,1}+\dots+b_it_{m,i}=0, \quad n-i+2\le m \le n.
\end{align*}
By Lemma~\ref{lem:siegel}, there exists a nontrivial integral solution $b_1,\dots,b_i$ satisfying
\begin{align}
\label{eq:bjbound}
|b_j|\ll H_{n-i+1}^{i-1}\prod_{k=n-i+2}^{n}\frac{1}{H_k}, \quad 1\le j \le i,
\end{align}
and hence by~\eqref{eq:gammastarequiv}
\begin{align}
\label{eq:start1111111111}
\left(\sum_{j=1}^{i}b_js_{n-i+2,j}\right)y_{n-i+2}+\dots+\left(\sum_{j=1}^{i}b_js_{n,j}\right)y_{n}\equiv 0 \mod{q},
\end{align}
for every tuple $(y_{n-i+2},\dots,y_n)$ such that there exists $x_1,\dots,x_n,y_1,\dots y_{n-i+1}$ with  $(x_1,\dots,y_n)\in \Gamma(z).$ Since $\omega_1,\dots,\omega_n$ forms a basis for  $\F_{q^{n}}$ as a vector space over $\F_q$, for an arbitrary choice of $y_{n-i+2},\dots,y_n\in \F_q$ there exists $x_1,\dots,x_n\in \F_q$ such that 
$$z(\omega_1x_1+\dots+\omega_nx_n)=\omega_{n-i+2}y_{n-i+2}+\dots+\omega_n y_n,$$
and hence by~\eqref{eq:start1111111111} we have
\begin{align*}
\sum_{j=1}^{i}b_js_{\ell,j}\equiv 0 \mod{q}, \quad n-i+2\le \ell \le n.
\end{align*}
By~\eqref{eq:Hcond21},~\eqref{eq:tsbounds1} and~\eqref{eq:bjbound}
\begin{align*}
\sum_{j=1}^{i}b_js_{\ell,j}\ll \frac{H_{n-i+1}^{i}}{H_n}\prod_{k=n-i+2}^{n}\frac{1}{H_k}<q,
\end{align*}
and hence 
\begin{align*}
\sum_{j=1}^{i}b_js_{\ell,j}=0, \quad n-i+2\le \ell \le n,
\end{align*}
contradicting the the fact that the points~\eqref{eq:pjstar} are linearly independent. This gives
\begin{align*}
\lambda^{*}_i(z)\gg  \frac{\varepsilon H_{n-i+1}}{q}.
\end{align*}
\end{proof}

\section{Proof of Theorem~\ref{thm:main1}}
For $z\in \F_{q^{n}}$ we let $I(z)$ count the number of solutions to the equation
\begin{align}
\label{eq:Izdef}
z(\omega_1x_1+\dots+\omega_nx_n)=\omega_1 y_1+\dots +\omega_ny_n, \quad M_i\le x_i,y_i\le M_i+H_i,
\end{align}
so that 
\begin{align*}
E(B)=\sum_{z \in \F_{q^n}}I(z)^2\le \sum_{\substack{z \in \F_{q^n} \\  z\neq 0 }}I(z)^2+|B|^2.
\end{align*}
We define the lattice 
$$\Gamma(z)=\{ (x_1,\dots,x_n,y_1,\dots,y_n)\in \Z^{2n}  :  z(\omega_1 x_1+\dots+\omega_n x_n)=\omega_1 y_1+\dots+\omega_ny_n\},$$
and the convex body
$$D=\{ (x_1,\dots,x_n,y_1,\dots,y_n)\in \R^{2n} : |x_i|,|y_i|\le H_i \ \ 1\le i \le n \}.$$
 For any two points $(x_1,\dots,y_n)$ and $(x_1',\dots,y_n')$ satisfying~\eqref{eq:Izdef} we have 
\begin{align*}
(x_1-x_1',\dots,y_n-y_n')\in \Gamma(z)\cap D,
\end{align*}
and hence
\begin{align*}
E(B)\le  \sum_{\substack{z \in \F_{q^n} \\  z\neq 0 }}|\Gamma(z)\cap D|^2+|B|^2.
\end{align*}
Let 
$$\Omega'=\{ z\in \F_{q^{n}}/\{0\} \ : \  \Gamma(z)\cap D\neq \{ 0\} \},$$
so that 
\begin{align}
\label{eq:Izlattice123}
E(B)\ll  \sum_{\substack{z \in \Omega'}}|\Gamma(z)\cap D|^2+|B|^2.
\end{align}
Let $\lambda_1(z),\dots,\lambda_{2n}(z)$ denote the successive minima of $\Gamma(z)$ with respect to $D$ and define
$$s(z)=\max\{ j  :  \lambda_j(z)\le 1 \}.$$
If $z\in \Omega'$ then $s(z)\ge 1$ and hence we may partition
\begin{align}
\label{eq:GammaDS}
\sum_{\substack{z \in \Omega'}}|\Gamma(z)\cap D|^2=\sum_{j=1}^{2n}S_j,
\end{align}` 
where 
$$S_j=\sum_{\substack{z \in \Omega_j}}|\Gamma(z)\cap D|^2.$$
Fix some $1\le j \le 2n$ and consider $S_j$. We first suppose that $1\le j \le n$. By Lemma~\ref{lem:latticesm}
\begin{align*}
S_j\ll \sum_{z\in \Omega_j}\prod_{i=1}^{j}\frac{1}{\lambda_i(z)^2}.
\end{align*}
For a $j$-tuple of integers  $(k_1,\dots,k_j)$ let
\begin{align*}
\Omega_j(k_1,\dots,k_j)=\{ z\in \Omega_j \ : 2^{-k_i-1}< \lambda_i(z)\le 2^{-k_i}, \ \ i=1,\dots,k \}.
\end{align*}
Since $\lambda_1(z)\le \lambda_2(z)\le \dots \le \lambda_j(z)$ we must have 
$$2^{k_i-1}\le 2^{k_1}, \quad 1\le i \le j,$$
and  by~\eqref{eq:Hcondthm1} and Lemma~\ref{lem:minimab1} 
\begin{align*}
\Omega(k_1,\dots,k_j)=\emptyset \quad \text{unless $2^{k_i}\le H_i$ for each $1\le i \le j$},
\end{align*}
which gives 
\begin{align*}
S_j \ll \sum_{\substack{k_1,\dots,k_j\ge 0 \\ 2^{k_i}\le H_i \\ 2^{k_i}\le 2^{k_1+1}}}2^{2(k_1+\dots+k_j)}|\Omega(k_1,\dots,k_j)|.
\end{align*}
Considering $\Omega(k_1,\dots,k_j)$, since each point $(x_1,\dots,y_n)\in D\cap \Z^{2n}$ can belong to at most one lattice $\Gamma(z)$ we have  
\begin{align*}
|\Omega(k_1,\dots,k_j)|\ll \left|2^{-k_1+1}D\cap \Z^{2n}\right|\ll \prod_{i=1}^{n}\left(\frac{H_i}{2^{k_1}}+1 \right)^2,
\end{align*}
 and hence 
\begin{align}
\label{eq:Sjbound}
S_j\ll \sum_{\substack{k_1,\dots,k_j \\ 2^{k_i}\le H_i \\ 2^{k_i}\le 2^{k_1+1}}}\prod_{\ell=1}^{n}\left(\frac{2^{k_{i}}H_i}{2^{k_1}}+2^{k_{i}} \right)^2\ll \prod_{i=1}^{n}H_i^2\sum_{\substack{k_1,\dots,k_j \\ 2^{k_i}\le H_i \\ 2^{k_i}\le 2^{k_1+1}}}1\ll |B|^2(\log{|B|})^{n},
\end{align}
where we set $k_{i}=0$ in the above sum if $i>j$.

 Consider next estimating $S_j$ when $n+1\le j \le 2n$. 
 If $z\in \Omega_j$ then by Lemma~\ref{lem:latticesm} and Lemma~\ref{lem:transfer}
\begin{align*}
|\Gamma(z)\cap D|\ll \prod_{i=1}^{j}\frac{1}{\lambda_i(z)}=\prod_{i=1}^{2n}\frac{1}{\lambda_i(z)}\prod_{i=j+1}^{2n}\lambda_i(z)\ll \prod_{i=1}^{2n}\frac{1}{\lambda_i(z)}\prod_{i=1}^{2n-j}\frac{1}{\lambda^{*}_i(z)},
\end{align*}
where $\lambda^{*}_i(z)$ denote the successive minima of the dual lattice $\Gamma^{*}(z)$ with respect to the dual body $D^{*}$. By Lemma~\ref{lem:mst}
\begin{align*}
\prod_{i=1}^{2n}\frac{1}{\lambda_i(z)}\ll \frac{(H_1\dots H_n)^2}{q^n},
\end{align*}
so that 
\begin{align*}
S_j\ll \frac{(H_1\dots H_n)^4}{q^{2n}}\sum_{\lambda \in \Omega_j}\prod_{i=1}^{2n-j}\frac{1}{\lambda^{*}_i(z)^2}.
\end{align*}
We have 
\begin{equation}
\label{eq:S2nbound}
S_{2n}\ll \frac{(H_1\dots H_n)^4}{q^{2n}}\sum_{z\in \F_{q^{n}}}1= \frac{|B|^4}{q^{n}},
\end{equation}
and it remains to consider when $n+1\le j \le 2n-1$. Writing 
$$j=2n-\ell,$$
for some $1\le \ell \le n-1$, we have 
\begin{align}
\label{eq:S2nell}
S_{2n-\ell}\ll \frac{(H_1\dots H_n)^4}{q^{2n}}T_{\ell},
\end{align}
where 
$$T_{\ell}=\sum_{\lambda \in \Omega_{2n-\ell}}\prod_{i=1}^{\ell}\frac{1}{\lambda^{*}_i(z)^2}.$$
For an $\ell$-tuple of integers $(k_1,\dots,k_{\ell})$  define
$$\Omega(k_1,\dots,k_{\ell})=\{ \lambda \in \Omega_{2n-\ell} \ : 2^{-k_i-1}<\lambda_i^{*}\le 2^{-k_i}, \ i=1,\dots,\ell \}.$$
Since $\lambda_1^{*}(z)\le \lambda_2^{*}(z) \le \dots \le \lambda_{\ell}^{*}(z)$ we must have 
\begin{align*}
2^{k_i-1}\le 2^{k_1},
\end{align*} 
and by~\eqref{eq:Hcondthm2} and Lemma~\ref{lem:gammastarminima}
\begin{align*}
\Omega(k_1,\dots,k_{\ell})=\emptyset \quad \text{unless $2^{k_i}\ll \frac{q}{H_{n-i+1}}$ for each $1\le i \le \ell$}.
\end{align*}
Since the contribution to $T_{\ell}$ from those $\lambda\in \Omega_{2n-\ell}$ with $\lambda_1^{*}(z)\ge 1$ is $O(q^{n})$ we see that
\begin{align*}
T_{\ell}\ll \sum_{\substack{k_1,\dots,k_{\ell}\ge 0 \\ 2^{k_i}\ll q/H_{n-i+1} \\ 2^{k_i}\le 2^{k_1+1}}}2^{2(k_1+\dots+k_{\ell})}|\Omega(k_1,\dots,k_\ell)|+q^{n}.
\end{align*}
Proceeding as in~\cite{Kon}, we next show that each point $(s_1/q,\dots,t_n/q)\in D^{*}\cap\Z^{2n}/q$ can belong to at most one lattice $\Gamma^{*}(z).$ If this were false then there would exist a tuple of integers $(t,s)$ and $z,z'\in \F_{q^{n}}$ such that $(t/q,s/q)\in \Gamma^{*}(z_1)\cap D^{*}$ and $(t/q,s/q)\in \Gamma^{*}(z_2)\cap D^{*}$. Since $\omega_1,\dots,\omega_n$ form a basis for $\F_{q^{n}}$ over $\F_q$, for every $x_1,\dots,x_n\in \F_q$ there exists $y_1,\dots,y_n\in \F_q$ and $y_1',\dots,y_n'\in\F_q$ such that 
\begin{align*}
(x_1,\dots,y_n)\in \Gamma(z), \quad (x_1,\dots,y_n')\in \Gamma(z'),
\end{align*}
which implies 
\begin{align*}
(y_1-y_1')t_1+\dots+(y_n-y_n')t_n\equiv 0 \mod{q}.
\end{align*}
We see that
$x(z-z')=y-y',$
where 
$$x=\omega_1x_1+\dots+\omega_nx_n, \quad y=\omega_1y_1+\dots+\omega_ny_n, \quad y'=\omega_1y'_1+\dots+\omega_ny'_n,$$
and hence we may choose $x$ so that $y-y'$ takes an arbitrary value in $\F_{q^n}$. This implies that 
\begin{align*}
t_i\equiv 0 \mod{q}, \quad 1\le i \le n,
\end{align*}
and since $(s/q,t/q)\in D^{*}$ we must have 
\begin{align*}
t_i=0, \quad 1\le i \le n.
\end{align*}
In a similar fashion we may show $s_i=s_i'$. Since each point $(s_1/q,\dots,t_n/q)\in D^{*}\cap\Z^{2n}/q$ can belong to at most one lattice $\Gamma^{*}(z)$ we have 
\begin{align*}
|\Omega(k_1,\dots,k_\ell)|\ll \left|2^{-k_1+1}D^{*}\cap\frac{\Z^{2n}}{q}\right|\ll \prod_{i=1}^{n}\left(\frac{q}{2^{k_1}H_{n-i+1}}+1 \right)^2,
\end{align*}
and hence 
\begin{align*}
T_{\ell}\ll \sum_{\substack{k_1,\dots,k_{\ell}\ge 0 \\ 2^{k_i}\ll q/H_{n-i+1} \\ 2^{k_i}\le 2^{k_1+1}}}\prod_{i=1}^{n}\left(\frac{2^{k_i}q}{2^{k_1}H_{n-i+1}}+2^{k_i} \right)^2\ll \frac{q^{2n}}{(H_1\dots H_n)^2}(\log{H_1})^{n},
\end{align*}
where we set $k_i=0$ in the above sum if $i>\ell$. Combining the above with~\eqref{eq:S2nell} we get 
\begin{align*}
S_{2n-\ell}\ll |B|^2(\log{|B|})^{n},
\end{align*}
and hence by~\eqref{eq:Izlattice123},~\eqref{eq:GammaDS},~\eqref{eq:Sjbound} and~\eqref{eq:S2nbound}
\begin{align*}
E(B)\ll \frac{|B|^{4}}{q^n}+|B|^2(\log{|B|})^{n},
\end{align*}
which completes the proof.
\section{Multiplicative energy of generalized arithmetic progressions}
For two  $d$-tuples of real numbers $\varepsilon=(\varepsilon_1,\dots,\varepsilon_d)$ and $\alpha=(\alpha_1,\dots,\alpha_d)$ we define the Bohr set
\begin{equation}
\label{eq:bohrdef}
B(\alpha,\varepsilon)=\left\{ 1\le x \le q-1 : \left\|\frac{\alpha_i x}{q} \right\|\le \varepsilon_i \quad i=1,\dots,d \right\},
\end{equation}
and for a  generalized arithmetic progression $\cA$ given by
$$\cA=\{ \alpha_1h_1+\dots+\alpha_dh_d \ : \ 1\le h_i \le H\},$$
 we let $E(\cA,\varepsilon)$ count the number of solutions to the congruence
$$a_1b_1\equiv a_2b_2 \mod q \quad a_1,a_2 \in \cA, \quad b_1,b_2\in B(\alpha, \varepsilon).$$

The following is based on some ideas of Ayyad, Cochrane and Zheng~\cite{ACZ}.
\begin{lemma}
\label{lem:reduction}
 With notation as above, suppose that $\cA$ is proper. Then we have 
$$E(\cA)=\frac{|\cA|^4}{q}+O\left(\frac{(\log{H})^{2d}}{q}\max_{\substack{1/H\le \varepsilon_i \le 1}}\frac{E(\alpha,\varepsilon)}{(\varepsilon_1\dots\varepsilon_d)^2} \right).$$
\end{lemma}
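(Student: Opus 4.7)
The plan is to apply additive Fourier analysis on $\F_q$ in the spirit of Ayyad--Cochrane--Zheng. First I would expand the congruence $a_1a_2 \equiv a_3a_4 \pmod q$ by orthogonality to get
$$E(\cA) = \frac{|\cA|^4}{q} + \frac{1}{q}\sum_{t \neq 0}|T(t)|^2, \qquad T(t) = \sum_{a_1,a_2 \in \cA} e_q(ta_1a_2).$$
Since $\cA$ is proper, opening the inner sum via $a_2 = \alpha_1 h_1 + \cdots + \alpha_d h_d$ factorises it cleanly as
$$T(t) = \sum_{a_1 \in \cA} F(ta_1), \qquad F(u) = \prod_{i=1}^{d}\sum_{h=1}^{H}e_q(u\alpha_i h),$$
and the elementary bound $\bigl|\sum_{h \le H}e_q(\beta h)\bigr| \ll \min(H,\|\beta/q\|^{-1})$ gives $|F(u)| \ll \prod_i \min(H, \|u\alpha_i/q\|^{-1})$.

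Next I would dyadically decompose $\F_q\setminus\{0\}$. For each tuple $\varepsilon = (\varepsilon_1,\ldots,\varepsilon_d)$ with every $\varepsilon_i$ a dyadic value in $[1/H, 1]$, let $U(\varepsilon)$ be the set of $u$ for which $\|u\alpha_i/q\|$ lies in the dyadic shell around $\varepsilon_i$ (with the tail $\|u\alpha_i/q\|\le 1/H$ absorbed into $\varepsilon_i = 1/H$). Then $U(\varepsilon) \subseteq B(\alpha,\varepsilon)$, there are $O((\log H)^d)$ choices, and $|F(u)| \ll (\varepsilon_1\cdots\varepsilon_d)^{-1}$ uniformly on $U(\varepsilon)$. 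Splitting $T(t)$ according to which shell $ta_1$ lies in yields
$$|T(t)| \ll \sum_{\varepsilon} \frac{N(t,\varepsilon)}{\varepsilon_1\cdots\varepsilon_d}, \qquad N(t,\varepsilon) = \#\{a_1 \in \cA : ta_1 \in U(\varepsilon)\}.$$

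Cauchy--Schwarz over the $O((\log H)^d)$ dyadic terms produces
$$|T(t)|^2 \ll (\log H)^d \sum_\varepsilon \frac{N(t,\varepsilon)^2}{(\varepsilon_1\cdots\varepsilon_d)^2}.$$
For each $\varepsilon$ the quantity $\sum_{t \neq 0} N(t,\varepsilon)^2$ counts triples $(a_1,a_3,t)$ with $ta_1,ta_3 \in U(\varepsilon)$; the substitution $u_j = ta_j$ puts each such triple in bijection with a solution of $u_1a_3 \equiv u_3a_1 \pmod q$ with $a_j \in \cA$ and $u_j \in U(\varepsilon) \subseteq B(\alpha,\varepsilon)$, so $\sum_{t \neq 0} N(t,\varepsilon)^2 \le E(\alpha,\varepsilon)$. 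A final $\sum_\varepsilon \le (\log H)^d \max_\varepsilon$ and division by $q$ deliver the claim.

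The only real care required is in the dyadic decomposition step: one must verify that the shells $U(\varepsilon)$ partition $\F_q\setminus\{0\}$, that $|F(u)| \ll (\varepsilon_1\cdots\varepsilon_d)^{-1}$ holds uniformly on each shell (including the cut-off at $\varepsilon_i = 1/H$ where $\min(H,\|u\alpha_i/q\|^{-1}) = H$), and that the range $1/H \le \varepsilon_i \le 1$ in the statement matches the natural cut-offs arising when $\|u\alpha_i/q\|$ is either very small or of order one. The multiplicative--energy reinterpretation via the change of variables $u_j = ta_j$ is the conceptual heart, but once the Bohr containment $U(\varepsilon) \subseteq B(\alpha,\varepsilon)$ is in place the count passes to $E(\alpha,\varepsilon)$ essentially for free.
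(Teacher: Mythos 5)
Your argument is correct and lands on the same bound, but it reorganizes the Fourier-analytic reduction in a genuinely different way from the paper. The paper writes $E(\cA)=\sum_{a_1,a_2,a_3\in\cA}\cA(a_1a_2a_3^{-1})$ (using properness so that $0\notin\cA$), expands the single indicator $\cA$ into additive characters, and obtains a \emph{double} frequency sum
$$\sum_{y,z}\widehat\cA(y)\,\overline{\widehat\cA(z)}\sum_{\substack{a_2,a_3\in\cA\\ a_2y\equiv a_3z}}1;$$
it then dyadically decomposes \emph{both} $y$ and $z$ into Bohr-shell pieces $B(j),B(k)$ and applies Cauchy--Schwarz to the inner convolution sum over $w=ya_3^{-1}$ to produce $E(\cA,\varepsilon_j)^{1/2}E(\cA,\varepsilon_k)^{1/2}$, paying $(\log H)^{2d}$ for the $(j,k)$ grid. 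You instead go directly to $E(\cA)=\tfrac{1}{q}\sum_t|T(t)|^2$, factor $T(t)=\sum_{a_1}F(ta_1)$ using properness, decompose only the \emph{single} variable $ta_1$ into dyadic shells, and apply Cauchy--Schwarz over the $O((\log H)^d)$ dyadic scales; the passage to $E(\cA,\varepsilon)$ then comes from the clean observation that $\sum_{t\neq 0}N(t,\varepsilon)^2\le E(\cA,\varepsilon)$ via the invertible change of variables $u_j=ta_j$. The two routes are essentially dual — your $\frac1q\sum_t\sum_{a_1,a_3}F(ta_1)\overline{F(ta_3)}$ becomes the paper's $(y,z)$-sum after the substitution $y=ta_1$, $z=ta_3$ — but your version has only one dyadic index to manage and makes the Cauchy--Schwarz step and the appearance of $E(\cA,\varepsilon)$ a bit more transparent. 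The one point to state explicitly, which you flag yourself, is that the shells $U(\varepsilon)$ cover $\F_q\setminus\{0\}$ (true since $u\alpha_i\not\equiv 0\pmod q$ for $u\neq 0$) and that absorbing the range $\|u\alpha_i/q\|\le 1/H$ into $\varepsilon_i=1/H$ still gives $|F(u)|\ll(\varepsilon_1\cdots\varepsilon_d)^{-1}$; both check out, so the proof is sound.
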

\begin{proof}
Let $\cA(x)$ denote the indicator function of the set $\mathcal{A}$ and let $\widehat \cA(y)$ denote the Fourier coefficients of $\cA(x)$, so that
\begin{align}
\label{eq:afc}
|\widehat \cA(y)|\ll \frac{1}{q}\prod_{i=1}^{d}\left(H,\frac{1}{\|\alpha_i y/q\|}\right) \quad \text{and} \quad \widehat \cA(0)=\frac{|\cA|}{q}.
\end{align}
Since $\cA$ is proper $0\not \in \cA$ and hence 
\begin{align*}
E(\cA,\cA)&=\sum_{\substack{ a_1,a_2,a_3\in \mathcal{A} \\ }} \cA\left(a_1a_2a_3^{-1} \right) \\
&=\frac{|\cA|^4}{q}+\sum_{y=1}^{q-1}\widehat \cA(y)\sum_{ a_1,a_2,a_3\in \mathcal{A} }e_q\left(a_1a_2a_3^{-1}y \right) \\
&=\frac{|\cA|^4}{q}+\sum_{y=1}^{q-1}\sum_{z=1}^{q-1}\widehat \cA(y) \overline{\widehat \cA(z)}\sum_{\substack{a_2,a_3\in \cA \\ a_2y\equiv a_3z \mod q}}1,
\end{align*}
which combined with~\eqref{eq:afc} implies
\begin{align}
\label{eq:errorub}
& E(\cA,\cA)-\frac{|\cA|^{4}}{q}\ll \nonumber \\ & \quad  \sum_{y=1}^{q-1}\sum_{z=1}^{q-1}\prod_{i=1}^{d}\left(H,\frac{1}{\|\alpha_i y/q\|}\right)\prod_{j=1}^{d}\left(H,\frac{1}{\|\alpha_j z/q\|}\right)\sum_{\substack{a_2,a_3\in \cA \\ a_2y \equiv a_3z \mod q}}1.
\end{align}
For a $d$-tuple of integers $j=(j_1,\dots j_d)$  we define the sets
\begin{align*}
& \quad  B(j)= \\ & \left \{ 1\le y \le q-1 \   :  \  \frac{(2^{j_i}-1)}{H}\le \left\|\frac{\alpha_i y}{q}\right\|<\frac{(2^{j_i+1}-1)q}{H}, \ \ \ 1\le i \le d \right \},
\end{align*}
so that as each $j_i$ ranges over values $0\le j_i \ll \log{H}$ the sets $B(j)$ cover the interval $1\le y \le q-1$ and if $y\in B(j)$ we have
\begin{align*}
\prod_{i=1}^{d}\left(H,\frac{1}{\|\alpha_i y/q\|}\right)\ll \prod_{i=1}^{d}\frac{H}{2^{j_i}},
\end{align*}
which gives
\begin{align*}
&\sum_{y=1}^{q-1}\sum_{z=1}^{q-1}\prod_{i=1}^{d}\left(H,\frac{1}{\|\alpha_i y/q\|}\right)\prod_{j=1}^{d}\left(H,\frac{1}{\|\alpha_j z/q\|}\right)\sum_{\substack{a_2,a_3\in \cA \\ a_2y \equiv a_3z \mod q}}1 \\
&\ll \sum_{\substack{j \\ 0\le j_i \ll \log{H}}}\sum_{\substack{k \\ 0\le k_i \ll \log{H}}}\prod_{i=1}^{d}\frac{H^2}{2^{j_i+k_i}}\sum_{y\in B(j)}\sum_{z\in B(k)}\sum_{\substack{a_2,a_3\in \cA \\ a_2y \equiv a_3z \mod q}}1.
\end{align*}
For $1\le i \le d$ let 
\begin{align*}
\varepsilon_{j,i}=2^{j_i+1}/H, \quad \varepsilon_{k,i}=2^{k_i+1}/H,
\end{align*}
and write
$$\varepsilon_j=(\varepsilon_{j,1},\dots,\varepsilon_{j,d}), \quad \varepsilon_k=(\varepsilon_{k,1},\dots,\varepsilon_{k,d}).$$ With $B(\alpha,\varepsilon)$ given as in~\eqref{eq:bohrdef}, we have
\begin{align*}
\sum_{y\in B(j)}\sum_{z\in B(k)}\sum_{\substack{a_2,a_3\in \cA \\ a_2y \equiv a_3z \mod q}}1&\le
\sum_{y\in B(\alpha,\varepsilon_j)}\sum_{z\in B(\alpha,\varepsilon_k)}\sum_{\substack{a_2,a_3\in \cA \\ a_2y \equiv a_3z \mod q}}1 \\
&=\sum_{w=1}^{q-1}\left(\sum_{\substack{y\in B(\alpha,\varepsilon_j)  \ a_3 \in \cA \\ ya_3^{-1}\equiv w \mod q }}1 \right)\left(\sum_{\substack{z\in B(\alpha,\varepsilon_k) \ a_1 \in \cA \\ za_1^{-1}\equiv w \mod q }}1 \right).
\end{align*}
Since
$$E(\cA,\varepsilon)=\sum_{w=1}^{q-1}\left(\sum_{\substack{y\in B(\alpha,\varepsilon) \ a_3 \in \cA \\ ya_3^{-1}\equiv w \mod q }}1\right)^2,$$
 an application of the  Cauchy-Schwarz inequality gives
\begin{align*}
\sum_{y\in B(j)}\sum_{z\in B(k)}\sum_{\substack{a_2,a_3\in \cA \\ a_2y \equiv a_3z \mod q}}1\le E(\cA,\varepsilon_j)^{1/2}E(\cA,\varepsilon_k)^{1/2}.
 \end{align*}
Substituting the above into~\eqref{eq:errorub} we arrive at 
\begin{align*}
E(\cA,\cA)-\frac{|\cA|^4}{q}&\ll \sum_{\substack{j \\ 0\le j_i \ll \log{H}}}\frac{1}{\varepsilon_{j,1}\dots \varepsilon_{j,d}}E(\cA,\varepsilon_j)^{1/2} \\
& \times \sum_{\substack{k \\ 0\le k_i \ll \log{H}}}\frac{1}{\varepsilon_{k,1}\dots \varepsilon_{k,d}}E(\cA,\varepsilon_j)^{1/2},
\end{align*}
and the result follows since there are $O((\log{H})^{2d})$ terms in summation over $j$ and $k$.
\end{proof}
The following is due to Shao~\cite[Proposition 2.1]{Shao1}.
 \begin{lemma}
\label{lem:shao}
For integers $q$ and $H$  and  a $d$-tuple of integers $\alpha=(\alpha_1,\dots,\alpha_d)$  suppose that the equation 
$$\alpha_1h_1+\dots+\alpha_dh_d\equiv 0 \mod q,$$
has no nontrivial solutions in integers $|h_i|\le H.$ Then for $\varepsilon=(\varepsilon_1,\dots,\varepsilon_d)$  with each $0\le \varepsilon_i\le 1/2$ the cardinality of the Bohr set
$$B(\alpha,\varepsilon)=\left\{ 1\le x \le q : \left\|\frac{\alpha_i x}{q}\right\|\le \varepsilon_i \quad i=1,\dots,d \right\},$$  
satisfies
\begin{align*}
|B(\alpha,\varepsilon)|\ll q \prod_{i=1}^{r}\left(\varepsilon_i+\frac{1}{H_i}\right).
\end{align*}
\end{lemma}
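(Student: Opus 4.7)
The plan is to bound the Bohr-set indicator pointwise from above by a product of one-dimensional trigonometric majorants whose Fourier supports lie in $\{k \in \Z : |k| \le H\}$, then expand via Fourier and use the properness hypothesis to kill every nonzero frequency after summing over $x$.

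First, I would invoke the classical Beurling--Selberg extremal construction on $\T = \R/\Z$: for each $1 \le i \le d$ there exists a nonnegative trigonometric polynomial $\psi_i$ of degree at most $H$ (so $\widehat{\psi_i}(k) = 0$ for $|k| > H$) with $\psi_i(y) \ge 1$ whenever $\|y\| \le \varepsilon_i$ and $\widehat{\psi_i}(0) \ll \varepsilon_i + 1/H$; see for instance Montgomery's \emph{Ten Lectures on the Interface Between Analytic Number Theory and Harmonic Analysis}, Chapter~1. Setting $F(y_1, \dots, y_d) = \prod_{i=1}^{d} \psi_i(y_i)$ produces a nonnegative majorant of the indicator of the Bohr box on $\T^d$ whose Fourier coefficients are supported in $\{(k_1,\dots,k_d) \in \Z^d : |k_i| \le H\}$ and satisfy $\widehat F(0) = \prod_i \widehat{\psi_i}(0) \ll \prod_i(\varepsilon_i + 1/H)$. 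It then follows that
$$|B(\alpha,\varepsilon)| \le \sum_{x=1}^{q} F\!\left(\frac{\alpha_1 x}{q}, \dots, \frac{\alpha_d x}{q}\right) = \sum_{\substack{k \in \Z^d \\ |k_i| \le H}} \widehat F(k) \sum_{x=1}^{q} e_q\bigl((k_1\alpha_1 + \dots + k_d\alpha_d) x\bigr),$$
where the inner sum equals $q$ when $k_1\alpha_1 + \dots + k_d\alpha_d \equiv 0 \pmod q$ and vanishes otherwise. By the properness hypothesis, applied with $h_i = k_i$, the only $k \in \Z^d$ with $|k_i| \le H$ satisfying this congruence is $k = 0$, so only the zeroth term survives and $|B(\alpha,\varepsilon)| \le q\,\widehat F(0) \ll q \prod_i (\varepsilon_i + 1/H)$.

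The main technical obstacle is the construction of the one-dimensional majorant $\psi_i$: one needs a function that simultaneously (a) dominates the arc indicator $\mathbf{1}_{\|y\|\le\varepsilon_i}$ pointwise, (b) is band-limited to $|k| \le H$, and (c) has mass only $O(\varepsilon_i + 1/H)$. No pair of these conditions is hard in isolation, but combining all three forces the use of the Beurling extremal function or an equivalent Selberg-type construction; a naive convolution of a slightly fattened indicator with a Fej\'er kernel fails to achieve (a) and (b) together. Once $\psi_i$ is in hand, the remainder will just be additive character orthogonality on $\Z/q\Z$ together with the hypothesis that $\alpha$ admits no short nontrivial integer relation modulo $q$.
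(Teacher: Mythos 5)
Your proof is correct. Note first that the paper itself does not prove this lemma; it is cited as Proposition~2.1 of Shao~\cite{Shao1}, so there is no in-paper argument to compare against. Your Selberg-majorant argument is a clean, self-contained proof of the cited statement, and it is the standard route: construct a nonnegative, band-limited majorant of the Bohr box on $\T^d$, sum its evaluations along the orbit $x \mapsto (\alpha_1 x/q,\dots,\alpha_d x/q)$, and observe that the properness hypothesis (no short nontrivial relation $\sum k_i\alpha_i\equiv 0 \pmod q$ with $|k_i|\le H$) together with orthogonality of additive characters on $\Z/q\Z$ leaves only the zero frequency. Each step is sound: the Beurling--Selberg construction supplies $\psi_i\ge 0$, $\psi_i\ge\mathbf{1}_{\|y\|\le\varepsilon_i}$, $\widehat{\psi_i}$ supported on $|k|\le H$, and $\widehat{\psi_i}(0)=2\varepsilon_i+1/(H+1)\ll\varepsilon_i+1/H$, valid across the edge cases $\varepsilon_i=0$ and $\varepsilon_i=1/2$; the product $F$ inherits nonnegativity and the box-shaped Fourier support; and the hypothesis, applied with $h_i=k_i$, eliminates every nonzero $k$ in that support.

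One small remark for completeness: the Selberg extremal function is not the only device that works here. A simpler Fej\'er-type majorant (e.g.\ a normalized $\bigl(\sin\pi(H+1)y/\sin\pi y\bigr)^2$ dilated and rescaled to dominate the arc) also satisfies (a)--(c) up to a larger absolute constant, which is all that is needed since the lemma only claims an $\ll$ bound. Shao's original proof is in the same Fourier-majorant spirit, so your argument is not a genuinely different route but a correct reconstruction of it. Also note the statement as typeset in the paper contains typos --- $\prod_{i=1}^{r}$ should be $\prod_{i=1}^{d}$ and $H_i$ should be $H$ --- which you have implicitly corrected.
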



\section{Proof of Theorem~\ref{thm:main2}}
We first note the assumption
\begin{align}
\label{eq:A'proper}
\cA'=\{ \alpha_1h_1+\dots+\alpha_dh_d \ : \ |h_i| \le H^2 \},
\end{align}
is proper implies that 
\begin{align*}
H\le q^{1/2d},
\end{align*}
and in particular 
$$\frac{H^{4d}}{q}\le H^{2d}.$$
Hence by Lemma~\ref{lem:reduction} it is sufficient to show that 
\begin{align}
\label{eq:thm2s1}
\max_{\substack{1/H\le \varepsilon_i \le 1}}\frac{E(\alpha,\varepsilon)}{(\varepsilon_1\dots\varepsilon_d)^2}\ll qH^{2d}(\log{H}).
\end{align}
Suppose 
\begin{align}
\label{eq:varepsilondeltadef}
\varepsilon=\left(\frac{\delta_1}{H},\dots,\frac{\delta_d}{H} \right),
\end{align}
is such that the expression occuring in~\eqref{eq:thm2s1} is maximum for some $\delta_1,\dots,\delta_d\ge 1$. We have
\begin{align}
\label{eq:Eell2}
E(\cA,\varepsilon)=\sum_{\omega=1}^{q-1}I(\omega)^2,
\end{align}
where $I(\omega)$ counts the number of solutions to the congruence
\begin{align}
\label{eq:Idef}
a\equiv \omega b \mod q \quad a\in \cA, \quad b\in B(\alpha,\varepsilon).
\end{align}
We define 
$$\cL=\{ (y_1,\dots,y_d)\in \Z^{d} : \exists \  1\le x \le q \ \ \text{such that} \ \   y_i\equiv \alpha_ix \mod q \},$$
and for each $1\le \omega \le q-1$ let $\Gamma(\omega)$ denote the lattice
$$\Gamma(\omega)=\{ (h,y)\in \Z^{d}\times \cL : \langle \alpha, h\rangle \equiv d^{-1}\omega \langle \alpha^{-1},  y \rangle \mod q \},$$
where $\langle , \rangle$ denotes the Euclidian inner product and $\alpha^{-1}$ denotes the vector formed by taking the inverse mod $q$ of each coordinate of $\alpha$, so that
$$\alpha^{-1}=(\alpha_1^{-1},\dots,\alpha_d^{-1}).$$ 
 Let $D(\delta)$ denote the convex body
$$D(\delta)=\left \{ (t,s)\in \R^{d}\times \R^{d} : |t_j|\le H, \quad |s_j|\le \frac{\delta_i q}{H} \right \}.$$
Since $\cA$ is proper, the set of points  $(h,y)\in \Gamma(w)\cap D(\delta)$ with $h\in \Z^{d}$, $y\in \cL$ and $(h,y)\neq 0$ is  in one-to-one correspondence with 
solutions to the congruence~\eqref{eq:Idef} via 
$$(h,y)\rightarrow (\alpha_1h_1+\dots+\alpha_dh_d,b),$$
where $b$ is defined by $y_i\equiv \alpha_i b.$ By~\eqref{eq:Eell2} this implies 
$$E(\cA,\varepsilon)\le \sum_{\omega=1}^{q-1}|\Gamma(\omega)\cap D(\varepsilon)|^2,$$
and hence by~\eqref{eq:thm2s1} and~\eqref{eq:varepsilondeltadef} it is sufficient to show that 
\begin{equation}
\label{eq:thm2s2}
\sum_{w=1}^{q-1}|\Gamma(w)\cap D(\varepsilon)|^2\ll q(\delta_1\dots \delta_d)^2(\log{H}).
\end{equation}

Let 
$$\Omega'=\{ 1\le \omega \le q-1 : \Gamma(w)\cap D(\varepsilon)\neq \{ 0 \} \},$$
so that
\begin{equation}
\label{eq:uplatticesum}
\sum_{w=1}^{q-1}|\Gamma(w)\cap D(\varepsilon)|^2\ll \sum_{\omega \in \Omega'}|\Gamma(w)\cap D(\varepsilon)|^2+  q.
\end{equation}
For each $1\le \omega\le q-1$ we let $\lambda_1(\omega),\dots,\lambda_{2d}(\omega)$ denote the successive minima of $D(\varepsilon)$ with respect to $\Gamma(\omega)$ and let  $\lambda^*_1(w),\dots,\lambda^*_{2d}(\omega)$ denote  the successive minima of $D^*(\varepsilon)$ with respect to $\Gamma^*(\omega)$. Considering points $(h,y)\in \Gamma(\omega)$, each $h\in \Z^d$ uniquley determines the residue mod $q$ of each coordinate of $y \mod q$ so that
$$\text{Vol}(\R^{2d}/\Gamma(\omega))=q^{d},$$
and since $|D|=q^{d}\delta_1\dots\delta_d,$ an application of Lemma~\ref{lem:mst} gives 
\begin{equation}
\label{eq:mst}
 \lambda_1(\omega)\dots\lambda_{2d}(\omega)\gg \frac{1}{\delta_1\dots\delta_d}.
\end{equation}
For each $\omega\in \Omega'$ we define the integer $s(\omega)$ by $$s(\omega)=\max\{ j : \lambda_j(\omega)\le 1\},$$
so that 
$$s(\omega)\ge 1 \quad \text{if} \quad \omega\in \Omega.$$
Let
\begin{align*}
\Omega=\{ \omega\in \Omega' \ : \ 1\le s(\omega)\le d\}, \quad \Omega^{*}=\{ \omega\in \Omega' \ : \ d+1\le s(\omega)\le 2d\},
\end{align*} 
and write 
\begin{align}
\label{eq:OmegaSS*}
\sum_{\omega \in \Omega'}|\Gamma(w)\cap D(\varepsilon)|^2=S+S^{*},
\end{align}
where
\begin{align*}
S=\sum_{\omega\in \Omega}|\Gamma(w)\cap D(\varepsilon)|^2,
\end{align*}
and 
\begin{align*}
S^{*}=\sum_{\omega\in \Omega^{*}}|\Gamma(w)\cap D(\varepsilon)|^2.
\end{align*}
Considering $S$, we partition $\Omega$ into
$$\Omega_j=\{ \omega \in \Omega \ : \ s(\omega)=j\},$$
and write 
\begin{align}
\label{eq:SSj}
S=\sum_{j=1}^{d}S_j,
\end{align}
where 
$$S_j=\sum_{\omega\in \Omega_j}|\Gamma(w)\cap D(\varepsilon)|^2.$$
If $\omega \in \Omega_j$ then by Lemma~\ref{lem:latticesm} we have 
\begin{align*}
|\Gamma(\omega)\cap D(\varepsilon)|\ll \prod_{i=1}^{j}\frac{1}{\lambda_i(\omega)},
\end{align*}
and hence  
\begin{align}
\label{eq:Sjb1}
S_j\ll \sum_{\omega\in \Omega_j}\prod_{i=1}^{j}\frac{1}{\lambda_i(\omega)^2}\ll \sum_{\omega\in \Omega_j}\frac{1}{\lambda_1(\omega)^{2j}}.
\end{align}
For integer $k$ we define the set
\begin{align*}
\Omega_j(k)=\{ \omega \in \Omega_j \ : \  2^{-(k+1)}\le \lambda_1(\omega)< 2^{-k} \},
\end{align*}
so that 
\begin{align}
\label{eq:omegaempty}
\Omega_j(k)=\emptyset \quad \text{if $2^{k}>H$},
\end{align}
and by~\eqref{eq:Sjb1}
\begin{align}
\label{eq:Sj1111111111}
S_j\ll \sum_{\substack{k \\ 2^{k}\le H}}2^{2jk}|\Omega_j(k)|.
\end{align}
Since each nonzero point $(h,y)\in \Z^d\times \cL$ can belong to at most one lattice $\Gamma(\omega)$, we see that
\begin{align}
\label{eq:omegab1111111111}
|\Omega_j(k)|\ll \frac{H^d}{2^{kd}}\left|B\left(\alpha,\frac{\delta}{2^{k}H}\right)\right|,
\end{align}
where
$$B(\alpha,\varepsilon)=\left\{ 1\le x \le q : \left\|\frac{\alpha_i x}{q}\right\|\le \frac{\delta_i}{2^kH} \quad i=1,\dots,d \right\}.$$  
By~\eqref{eq:A'proper}, ~\eqref{eq:omegaempty} and Lemma~\ref{lem:shao}
\begin{align*}
|B(\alpha,\varepsilon)|\ll q \prod_{i=1}^{r}\left(\frac{\delta_i}{2^kH}+\frac{1}{H^2}\right)\ll \frac{q(\delta_1\dots \delta_d)}{2^{kd}H^d},
\end{align*}
which combined with~\eqref{eq:Sj1111111111} and~\eqref{eq:omegab1111111111} gives 
\begin{align*}
S_j\ll q(\delta_1 \dots \delta_d)\sum_{\substack{k \\ 2^{k}\le H}}\frac{1}{2^{2(d-j)k}}\ll q(\delta_1 \dots \delta_d)(\log{H}),
\end{align*}
and hence by~\eqref{eq:SSj}
\begin{align}
\label{eq:Sb123}
S\ll q(\delta_1 \dots \delta_d)(\log{H}).
\end{align}

Considering $S^{*}$, we first note that  dual lattice $\Gamma^{*}(\omega)$ and dual body $D^{*}(\delta)$ are given by 
\begin{align*}
\Gamma^{*}(\omega)=\left\{ \left(\frac{h}{q},\frac{y}{q}\right), \ h\in \cL, \ y\in \Z^{d} \ : \langle \alpha,y \rangle \equiv -d\omega \langle \alpha^{-1},h\rangle  \right\},
\end{align*}
and
\begin{align*}
D^{*}(\delta)=\left\{ (t,s)\in \R^{d}\times \R^{d} \ : \sum_{i=1}^{d}|t_i|H+\sum_{i=1}^{d}\frac{\delta_i q}{H}|s_i|\le 1 \right\}.
\end{align*}
For integer $1\le j \le d$ we let
$$\Omega^{*}_j=\{ \omega \in \Omega^{*} \ : \ s(\omega)=d+j\},$$
and partition 
\begin{align}
\label{eq:SSj}
S^{*}=\sum_{j=1}^{d}S^{*}_j,
\end{align}
where 
$$S^{*}_j=\sum_{\omega\in \Omega^{*}_j}|\Gamma(w)\cap D(\varepsilon)|^2.$$

 Fix some $1\le j \le d$ and consider $S_j^{*}$. If  $\omega\in \Omega^{*}(j)$ then by Lemma~\ref{lem:latticesm} and~\eqref{eq:mst} we have 
\begin{align*}
|\Gamma(w)\cap D(\varepsilon)|^2&\ll \prod_{i=1}^{d+j}\frac{1}{\lambda_i(\omega)^2} \\ &=\prod_{i=1}^{2d}\frac{1}{\lambda_i(\omega)^2}\left(\prod_{i=d+j+1}^{2d}\lambda_i(\omega)^2\right) \\ & \ll (\delta_1 \dots \delta_d)^2\left(\prod_{i=d+j+1}^{2d}\lambda_i(\omega)^2\right).
\end{align*}
Let $\lambda_i^{*}(\omega)$ denote the $i$-th successive minima of $\Gamma^{*}(\omega)$ with respect to $D^{*}(\delta)$, so that  by Lemma~\ref{lem:transfer}
\begin{align*}
|\Gamma(w)\cap D(\varepsilon)|^2\ll (\delta_1 \dots \delta_d)^2\prod_{i=1}^{d+1-j}\frac{1}{\lambda^{*}_i(\omega)^2}\ll (\delta_1 \dots \delta_d)^2\frac{1}{\lambda^{*}_1(\omega)^{2(d+1-j)}},
\end{align*}
and hence 
\begin{align*}
S_j^{*}\ll (\delta_1\dots \delta_d)^{2}\sum_{\omega\in \Omega^{*}_j}\frac{1}{\lambda^{*}_1(\omega)^{2(d+1-j)}}.
\end{align*}
Let $D^{*}$ denote the convex body 
\begin{align*}
D^{*}=\left\{ (t,s)\in \R^{d}\times \R^{d} \ : \  |t_i|\le \frac{1}{H}, \ |s_i|\le \frac{H}{q}  \right\},
\end{align*}
and let $\mu_1(\omega)$ denote the first successive minima of $\Gamma^{*}(\omega)$ with respect to $D^{*}$. Since 
$$D^{*}(\delta)\subseteq D^{*},$$
we have $$\mu_1(\omega)\le \lambda_1^{*}(\omega),$$
and hence  
\begin{align*}
S_j^{*}\ll (\delta_1\dots \delta_d)^{2}\sum_{\omega\in \Omega^{*}_j}\frac{1}{\mu_1(\omega)^{2(d+1-j)}}.
\end{align*}
We partition
\begin{align*}
\Omega^{*}_j(k)=\{ \omega \in \Omega^{*}_j \ : \ 2^{-(k+1)}\le \mu_1(\omega)<2^{-k} \},
\end{align*}
so that 
\begin{align}
\label{eq:omegastarempty}
\Omega^{*}_j(k)=\emptyset \quad \text{if $2^{k}>H$},
\end{align}
and by the above 
\begin{align*}
S_j^{*}\ll (\delta_1\dots \delta_d)^{2}\sum_{\substack{k \\ 2^k\le H}}2^{2(d+1-j)k}|\Omega^{*}_j(k)|.
\end{align*}
Arguing as in the case of $S_j$, since each nonzero point 
$$\left(\frac{h}{q},\frac{y}{q}\right), \ h\in \cL, \ y\in \Z^{d},$$
belongs to at most one lattice $\Gamma^{*}(\omega)$, we have
\begin{align*}
|\Omega_j(k)|\ll \frac{H^d}{2^{kd}}\left|B\left(\alpha,\frac{1}{2^{k}H}\right)\right|,
\end{align*}
where
$$B(\alpha,\varepsilon)=\left\{ 1\le x \le q : \left\|\frac{\alpha_i x}{q}\right\|\le \frac{1}{2^kH} \quad i=1,\dots,d \right\}.$$  
By~\eqref{eq:A'proper}, ~\eqref{eq:omegastarempty} and Lemma~\ref{lem:shao}
\begin{align*}
|B(\alpha,\varepsilon)|\ll q \prod_{i=1}^{r}\left(\frac{1}{2^kH}+\frac{1}{H^2}\right)\ll \frac{q}{2^{kd}H^d},
\end{align*}
which implies 
\begin{align*}
|\Omega_j(k)|\ll \frac{q}{2^{2kd}},
\end{align*}
and hence 
\begin{align*}
S_j^{*}\ll q(\delta_1\dots \delta_d)^{2}\sum_{\substack{k \\ 2^k\le H}}\frac{1}{2^{k(j-1)}}\ll q(\delta_1\dots \delta_d)^{2}(\log{H}).
\end{align*}
Combining the above with~\eqref{eq:SSj} we get 
\begin{align*}
S^{*}\ll q(\delta_1\dots \delta_d)^{2}(\log{H}),
\end{align*}
and the result follows from~\eqref{eq:thm2s2},~\eqref{eq:OmegaSS*} and~\eqref{eq:Sb123}.
\section{Proof of Corollary~\ref{cor:main2}}
Let $I(\lambda)$ count the number of solutions to 
\begin{align*}
a_1\equiv \lambda a_2 \mod{q},
\end{align*}
with $a_1,a_2\in \cA,$
so that 
\begin{align*}
E(\cA)=\sum_{\lambda}I(\lambda)^2.
\end{align*}
Let $\cA_0$ denote the progression
\begin{align*}
\cA_0=\{ \alpha_1h_1+\dots+\alpha_dh_d \ : \ |h_i|\le H \},
\end{align*}
and suppose $I_0(\lambda)$ counts the number of solutions to the equation
\begin{align*}
a_1\equiv \lambda a_2, \quad a_1,a_2\in \cA_0.
\end{align*}
If $I(\lambda)\neq 0$ then 
\begin{align*}
I(\lambda)\le I_0(\lambda),
\end{align*}
and hence 
\begin{align*}
E(\cA)\le \sum_{\lambda}I_0(\lambda)^2+|\cA|^2\le E(\cA_0)+|\cA|^2,
\end{align*}
and the result follows since $\cA_0$ is the union of at most $2^d$ proper progressions of the form covered by Theorem~\ref{thm:main2}.


\begin{thebibliography}{99}
\bibitem{ACZ}
A. Ayyad, T. Cochrane and Z. Zheng, {\it The congruence $x_1x_2\equiv x_3x_4 \mod p$, the equation $x_1x_2=x_3x_4$ and mean values of character sums,} J. Number Theory, {\bf 59}, (1996), 398--413.


\bibitem{Ba}
W. Banaszczyk, {\it Inequalities for convex bodies and polar reciprocal lattices in $\mathbb{R}^n$,} Discrete Comput. Geom.,
{\bf 13}, 2, (1995), 217--231.

\bibitem{BHM}
U. Betke, M. Henk and J. M. Wills, {\it Successive-minima-type inequalities,}
Discr. Comput. Geom., {\bf 9}
(1993), 165--175.

\bibitem{BV}
E. Bombieri and J. Vaaler. {\it On Siegel’s lemma,}
Invent. Math.
{\bf 73}
(1983), 11--32.


\bibitem{BC}
J. Bourgain and M. C. Chang,  {\it On a multilinear character sum of Burgess,} C. R. Acad. Sci. Paris, Ser. I, {\bf 348}, (2010), 115--120.

\bibitem{BKT}
J. Bourgain, N. Katz and T. Tao,
{\it A sum-product estimate in finite fields and their
applications,}
 Geom. Func. Anal. {\bf 14} (2004), 27--57.

\bibitem{Bur1}
 D. A. Burgess,
{\it Character sums and primitive roots in finite fields,}
 Proc. Lond. Math. Soc. (3)
{\bf 17}
,11--
25
(1967).

\bibitem{Chang2}
M. C. Chang,  {\it Factorization  in  generalized  arithmetic  progressions  and  applications  to  the Erd\"{o}s-Szemer\'{e}di sum-product problems,}
Geom. Funct. Anal.
{\bf 13} (2003) 720--736.

\bibitem{Chang1}
M. C. Chang, {\it On a question of Davenport and Lewis and new character sum bounds in finite fields,} Duke Math. J., {\bf 145}, (2008), 409--442.

\bibitem{DL}
H.  Davenport  and  D.  J.  Lewis,
{\it Character  sums  and  primitive  roots  in finite
fields,}
Rend.  Circ.  Mat.
Palermo (2)
{\bf 12}, 129--136, (1963).


\bibitem{El}
G. Elekes,
{\it On the number of sums and products,}
 Acta Arith., {\bf 81} (1997) 365--367.

\bibitem{ER}
G. Elekes
and
I. Z. Ruzsa, {\it Few sums, many products,}
Studia Sci. Math. Hungar.
{\bf 40},
(2003), 301--308.


\bibitem{ES}
P. Erd\"{o}s and E. Szemer\'{e}di,
{\it On sums and products of integers,}
 Studies in Pure Mathematics. To the memory of Paul Turn, Basel: Birkh\"{a}user Verlag, 
 (1983)  213--218.

\bibitem{Ga}
M. Gabdullin, {\it Estimates for character sums in finite fields of order $p^2$ and $p^3$,} arXiv:1806.04783.

\bibitem{Kar1}
A. A. Karatsuba,
{\it Character  sums and primitive roots in
finite
fields,}
Dokl. Akad. Nauk  SSSR
{\bf 180}
(6), (1968),
1287--1289.

\bibitem{Kar2}
A. A. Karatsuba,
{\it On estimates of character sums,}
Izv. Akad. Nauk SSSR Ser. Mat.
{\bf 34}
(1), (1970), 20--30.

\bibitem{KS}
N. Katz and C.Y. Shen, {\it Garaev's inequality in finite fields not of prime order,}
Online J. Anal.
Comb. {\bf 3}, 2008.

\bibitem{LR}
L. Li and O. Roche-Newton, {\it An improved sum-product estimate for general finite fields,} SIAM J. Discrete Math. {\bf 25}, no. 3, (2011), 1285--1296.

\bibitem{Kon} 
S. V. Konyagin {\it Estimates of Character Sums in Finite fields,}  Mathematical Notes, {\bf 88}, No. 4, (2010), 503--515.

\bibitem{Mah}
K. Mahler,
{\it Ein \"{U}bertragungsprinzip f\"{u}r konvexe K\"{o}rper,}
 Math. Casopis
{\bf 68}
(1939), 93--102.

\bibitem{MPRRS}
B. Murphy, G. Petridis, O. Roche-Newton, M. Rudnev and I. Shkredov,
{\it New results on sum-product type growth over fields},
arXiv:1702.01003v3.
\bibitem{RRS}
O. Roche-Newton, M. Rudnev and I. D. Shkredov, {\it New sum-product type estimates over finite fields},  Adv. Math. {\bf 293}, (2016), 589--605.

\bibitem{RSS}
M. Rudnev, G. Shakan and I. Shkredov, {\it Stronger sum-product inequalities for small sets}, arXiv:1808.08465.

\bibitem{Sha}
G. Shakan, {\it On higher energy decompositions and the sum-product phenomenon}, Math. Proc. Camb. Phil. Soc. doi:10.1017/S0305004118000506

\bibitem{Shao1}
X. Shao, {\it On character sums and exponential sums over generalized arithmetic progressions}, Bull. London Math. Soc., {\bf 45}, (3), (2013), 541--550. 
\bibitem{TV}
T. Tao and V. Vu, {\it Additive Combinatorics, Cambridge} Stud. Adv. Math. 105,
Cambridge Univ. Press, Cambridge, 2006. MR 2289012
\end{thebibliography}
\end{document}